\documentclass[12pt]{amsart}
\usepackage[utf8]{inputenc}
\usepackage[T1]{fontenc}
\usepackage{amsthm}
\usepackage{amssymb}
\usepackage{amsfonts}
\usepackage{interval}
\intervalconfig{soft open fences}
\usepackage{cite}
\usepackage{enumerate}
\usepackage{enumitem}
\usepackage{hyperref}
\usepackage[noabbrev,capitalise]{cleveref}
\usepackage{geometry}
\geometry{left=19.5mm,right=19.5mm, top=18mm, bottom=20mm}
\linespread{1.2}
\usepackage{fullpage}
\usepackage{bbm}
\usepackage{color}
\usepackage{dirtytalk}
\usepackage[draft]{todonotes}
\usepackage[right]{lineno}
\footskip28pt

\usepackage{setspace}

\newtheoremstyle{case}{}{}{\normalfont}{}{\itshape}{:}{ }{}

\newcommand{\oldqed}{}
\def\endofFact{\hfill\scalebox{.6}{$\Box$}}
\newenvironment{claimproof}[1][Proof]{
	\renewcommand{\oldqed}{\qedsymbol}
	\renewcommand{\qedsymbol}{\endofFact}
	\begin{proof}[#1]
	}{
	\end{proof}
	\renewcommand{\qedsymbol}{\oldqed}
}

\title{on the anti-Ramsey threshold for non-balanced graphs}

\author[P.~Campos]{Pedro Ara\'{u}jo}
\author[T.~Martins]{Ta\'{\i}sa Martins}
\author[L.~Mattos]{Let\'{\i}cia Mattos}
\author[W.~Mendon\c{c}a]{Walner Mendon\c{c}a}
\author[L.~Moreira]{Luiz Moreira}
\author[G.~O.~Mota]{Guilherme O. Mota}

\address{Institute of Computer Science of the Czech Academy of Sciences, Pod Vodárenskou věží 2, 18207, Prague, Czech Republic (P.~Araújo)}
\email{araujo@cs.cas.cz}

\address{Instituto de Matem\'atica, Universidade Federal Fluminense, 
	Niter\'oi, Brazil (T.~Martins)}
\email{tlmartins@id.uff.br}

\address{Freie Universität Berlin and Berlin Mathematical School (BMS/MATH+), Arnimallee 3, 14195 Berlin, Germany (L.~Mattos)}
\email{lmattos@zedat.fu-berlin.de}

\address{IST Austria, Am Campus 1, 3400 Klosterneuburg, Austria (W.~Mendon\c{c}a)}
\email{wmendonc@ist.ac.at}

\address{Guaca Macramê, Rua do Z 33, 20251-600, Rio de Janeiro, RJ, Brazil (L.~Moreira)}          
\email{lzplfm@gmail.com}

\address{Instituto de Matem\'atica e Estat\'{\i}stica, Universidade de
	S\~ao Paulo, Rua do Mat\~ao 1010, 05508-090 S\~ao Paulo, Brazil (G.~Mota)}
\email{mota@ime.usp.br}

\thanks{%
	P. Araújo was supported by the long-term strategic development financing of the Institute of Computer Science (RVO: 67985807) and by the Czech Science Foundation, grant number GJ20-27757Y.
	L.~Mattos was supported by CAPES and by the Deutsche Forschungsgemeinschaft (DFG, German Research Foundation) under Germany's Excellence Strategy – The Berlin Mathematics Research Center MATH+ (EXC-2046/1, project ID: 390685689). 
	W.~Mendon\c{c}a was supported by CAPES (88882.332408/2010--01).
    L. Moreira was supported by CNPq.
	G. O. Mota was supported by CNPq (306620/2020-0,
	428385/2018-4) and FAPESP (2018/04876-1, 2019/13364-7).
}

\newtheorem{thm}{Theorem}[section]

\newtheorem{lemma}[thm]{Lemma}

\newtheorem{cor}[thm]{Corollary}

\newtheorem{claim}[thm]{Claim}
\newtheorem{prop}[thm]{Proposition}

\newcommand{\Gs}{\mathcal{G}^{*}}

\newcommand{\flecha}{\overset{\mathrm{rb}}{\longrightarrow}}
\newcommand{\naoflecha}{\overset{\mathrm{rb}}{\nrightarrow}}

\newcommand{\prb}[1]{\mathrm{p}^{\mathrm{rb}}_{#1}}

\newcommand{\gnp}{G\left(n,p\right)}

\newcommand{\se}{\subseteq}
\newcommand{\disc}{\mathrm{DISC}}

\def\le{\leqslant}
\def\leq{\leqslant}
\def\ge{\geqslant}
\def\geq{\geqslant}
\def\A{\mathcal{A}}

\def\G{\mathcal{G}}
\def\E{\mathbb{E}}
\def\cE{\mathcal{E}}

\def\P{\mathbb{P}}
\def\cP{\mathcal{P}}

\def\eps{\varepsilon}

\begin{document}
	\begin{abstract}
		For graphs $G,H$, we write $G \flecha H $ if any proper 
		edge-coloring of $G$ contains a rainbow copy of $H$, i.e., a copy 
		where no color appears more than once.
		Kohayakawa, Konstadinidis and the last author proved that the threshold for $G(n,p) \flecha H$ is at most $n^{-1/m_2(H)}$.
		Previous results have matched the lower bound for this anti-Ramsey threshold for cycles and complete graphs with at least 5 vertices.
		Kohayakawa, Konstadinidis and the last author also presented an infinite family of graphs $H$ for which the anti-Ramsey threshold is asymptotically smaller than $n^{-1/m_2(H)}$.
		In this paper, we devise a framework that provides a richer and more complex family of such graphs that includes all the previously known examples.
	\end{abstract}
	
	\maketitle
	
	\section{Introduction}
	
	We say that a graph $G$ has the anti-Ramsey property $G\flecha H$ if every proper edge-coloring of $G$ contains a rainbow copy of $H$. The study of anti-Ramsey properties can be traced back to a question of Spencer, mentioned by Erd\H os in~\cite{Erdos79}: Does there exist a graph with arbitrarily large girth and such that every proper edge-coloring contains a rainbow cycle? Rödl and Tuza answered this question affirmatively \cite{RT} by showing that $G(n,n^{\varepsilon-1})\flecha C_k$ for large $k$ and by deleting one edge from each small cycle.

	As $G\flecha H$ is an increasing property, it admits a threshold function $\prb{H}=\prb{H}(n)$, which is the focus of this work. Kohayakawa, Konstadinidis and the last author~\cite{KKM14} proved that, for any fixed graph $H$, we have $\prb{H} \leq n^{-1/m_2(H)}$, where

    \begin{equation}
		\label{eq:m2density}
		m_{2}(H):=\max\left\{ \frac{e(J)-1}{v(J)-2} : J \subseteq H, v(J) \geq 3 \right \}
	\end{equation}
	is the $m_{2}$-density of the graph $H$.
	Nenadov, Person, \v{S}kori{\'c} and Steger~\cite{NPSS} showed that this upper bound is sharp for cycles with at least $7$ vertices and complete graphs with at least $19$ vertices.
	This result was extended for cycles and cliques with at least $5$ vertices, in~\cite{BCMP} and~\cite{KMPS}, respectively.
	
	Apart from cliques and cycles, not much is known about $\prb{H}$.
	One might feel compelled to conjecture that indeed this threshold is determined by the $m_2$-density
	for all graphs, specially because of `standard' Ramsey threshold results such as the one from Rödl and Ruciński~\cite{RR95}. However, the
	authors of~\cite{KKM18} proved that this is not the case for a fairly large class of graphs, which
	is an evidence of the inherent difference between anti-Ramsey and standard Ramsey properties. To illustrate this difference, we consider the case of $H=K_3$. Note that every proper-coloring of a triangle is rainbow and therefore the threshold for the event $G(n,p)\flecha K_3$ is the same as the threshold for the appearance of triangles, which is a local property. For some other graphs $H$, such as cliques and cycles with more than $3$ vertices, it turns out that the property $G(n,p)\flecha H$ seems to be related with more global aspects of the host graph. In this paper we explore the interplay of these two cases.
	
	The class of graphs in~\cite{KKM18} consists of graphs obtained by `attaching' a triangle to an edge of a graph $H$ with $m_2(H)<2$, a result that we extend by allowing different graphs to be attached to $H$.
	Before we state our results, we formally define an \emph{amalgamation} of graphs, which we denote by $\oplus$.
	A \emph{$2$-labeled graph} is a graph in which exactly 
	one vertex is labeled $1$, exactly one vertex is labeled $2$ and they form an edge.
	For any $2$-labeled graphs $F$ and $H$, we define $F\oplus H$ as the graph obtained by taking the disjoint union of $F$ and $H$ and identifying the
	vertices with label $1$ and the vertices with label $2$ together with the respective edge.
	An illustration of this definition is depicted on~\cref{fig:amalg}.

	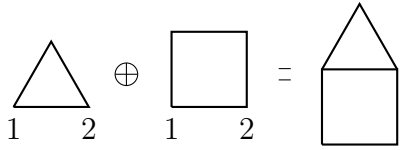
\begin{figure}
		\begin{tikzpicture}
			\draw[thick,black] (0,0) node [below] {$1$} -- (1,0) node [below] {$2$} -- (0.5,0.87) -- (0,0);
			\draw (1.5,0.435) circle [radius=0.15];
			\draw (1.5,0.285) -- (1.5,0.585);
			\draw (1.35,0.435) -- (1.65,0.435);
			\draw[thick,black] (2.1,0)  node [below] {$1$} -- (3.1,0) node [below] {$2$} -- (3.1,1)  -- (2.1,1) -- (2.1,0);
			\draw (3.525,0.51) -- (3.685,0.51);
			\draw (3.525,0.36) -- (3.685,0.36);
			\draw[thick,black] (4.1,-0.5) -- (5.1,-0.5) -- (5.1,0.5)  -- (4.1,0.5) -- (4.1,-0.5);
			\draw[thick,black] (5.1,0.5) -- (4.6,1.37) -- (4.1,0.5);
		\end{tikzpicture}
		\caption{An example of amalgamation.}\label{fig:amalg}
	\end{figure}

	We say that a graph $S$ is $2$-balanced if the maximum in \eqref{eq:m2density} is attained by $S$ itself. We are now ready to state our main theorem.
	
	\begin{thm}\label{thm:main}
		Let $H$, $F$ be $2$-labeled graphs with $1 < m_2(H) < m_2(F)$. For any $2$-balanced graph $S$ such that $S \flecha F$, there exists $C>0$ such that

        \begin{align*}
			\lim_{n \rightarrow \infty} \P\left[G(n,p) \flecha F \oplus H\right] = 1, \text{ if }p\geq Cn^{-\beta(H,S)},
		\end{align*}
		where
		\[
          \beta(H,S)=\dfrac{1}{e(S)}\left(v(S)-2+\dfrac{1}{m_2(H)} \right).
        \]
	\end{thm}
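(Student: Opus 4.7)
The plan is to reveal $\gnp$ first and, for any adversarial proper edge-coloring $\chi$, construct a rainbow copy of $F \oplus H$ in two stages: first extract a rainbow copy $F'$ of $F$ from a copy of $S$ using the hypothesis $S \flecha F$, then attach a rainbow copy of $H$ at the labeled edge of $F'$ whose colors avoid those used by $F'$. The key numerical input is $\beta(H,S) < 1/m_2(H)$, which after clearing denominators reduces to $m_2(H)(v(S)-2) < e(S)-1$ and follows from $m_2(H) < m_2(F) \leq m_2(S)$; here $m_2(F) \leq m_2(S)$ holds because $S \flecha F$ forces $F \subseteq S$, and $S$ is 2-balanced. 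Hence $p = C n^{-\beta(H,S)}$ sits strictly above the standard threshold for copies of $H$ rooted at an edge. A short parallel computation gives $\beta(H,S) < 1$, so $np \to \infty$, and the expected number of copies of $S$ in $\gnp$ is $\Theta(n^{2 - 1/m_2(H)}) \to \infty$.

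Next, I would invoke Janson-type inequalities and standard subgraph concentration in $\gnp$ to establish, with high probability, that (a) $\gnp$ contains $\Theta(n^{2-1/m_2(H)})$ copies of $S$, and (b) for a $(1-o(1))$-fraction of edges $e$ of $\gnp$, henceforth called \emph{typical}, the number of copies of $H$ rooted at $e$ is $\Theta(n^{v(H)-2} p^{e(H)-1})$. A routine double-counting argument then shows that most copies of $S$ contain a typical edge, which can serve as the labeled edge of the eventual rainbow $F$. Now fix a proper edge-coloring $\chi$ of $\gnp$, select a copy of $S$ and a typical edge $e$ inside it, and use $S \flecha F$ to produce a rainbow copy $F'$ of $F$ with labeled edge $e$. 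Let $C_F$ denote the set of $e(F)$ colors used by $F'$.

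To complete the argument, I would bound two bad counts among the $\Theta(n^{v(H)-2}p^{e(H)-1})$ rooted copies of $H$ at $e$. Since every color class of $\chi$ is a matching of size at most $n/2$, the $|C_F| \leq e(F)$ forbidden colors together exclude $O(n)$ edges; the number of rooted $H$'s at $e$ using any such forbidden edge is at most $O(n \cdot n^{v(H)-4} p^{e(H)-2}) = O(n^{v(H)-3} p^{e(H)-2})$. The non-rainbow rooted $H$'s at $e$ are those in which two non-adjacent edges of $H$ share a color $c$; writing $M_c$ for the matching of color $c$ and summing over pairs of non-adjacent edges of $H$ and colors $c$, one gets
\[
 \sum_c |M_c|^2 \cdot n^{v(H)-6} p^{e(H)-3} \;\leq\; \frac{n}{2}\, e(\gnp)\, n^{v(H)-6} p^{e(H)-3} = O\bigl(n^{v(H)-3} p^{e(H)-2}\bigr),
\]
using $|M_c| \leq n/2$ and $\sum_c |M_c| = e(\gnp) = O(n^2 p)$. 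Both bad counts are $o(n^{v(H)-2} p^{e(H)-1})$ since $np \to \infty$, so plenty of rooted copies of $H$ at $e$ are rainbow and avoid $C_F$; gluing any such $H$ to $F'$ along $e$ yields the desired rainbow $F \oplus H$.

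The main technical obstacle is item (b): the per-edge concentration of rooted $H$-counts, particularly when $H$ is not 2-balanced, in which case the single-edge rooted threshold may be governed by extension densities different from $m_2(H)$. The standard workaround is to first locate a 2-balanced subgraph $H^\star \subseteq H$ containing the labeled edge and exploit the slack $\beta(H,S) < 1/m_2(H) = 1/m_2(H^\star)$ guaranteed by the hypothesis; combined with Janson's inequality applied to rooted subgraph counts, this should yield (b) for all but a negligible set of edges, which is enough.
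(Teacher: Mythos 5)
Your proof outline takes a genuinely different route from the paper's. The paper randomly splits the color set into $e(H)$ classes, uses the concentration/discrepancy machinery (Lemmas~\ref{lemma:concdeg}, \ref{lemma:discGnp}, \ref{lemma:disc}, \ref{tools:lemma2}) to show that each color class induces a pseudo-random graph, and then invokes the K\L{}R theorem (Theorem~\ref{thm:KLR}) to find a transversal rainbow copy of $H$. You instead attempt a direct local argument: find a rainbow $F$ inside a copy of $S$, then count rooted copies of $H$ at the labeled edge and argue most of them are rainbow and color-disjoint from $F$. If this worked it would be a substantial simplification, but there is a real gap.

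The problem is your bound on the number of non-rainbow rooted copies of $H$. You write
\[
\sum_c |M_c|^2 \cdot n^{v(H)-6}p^{e(H)-3} = O\bigl(n^{v(H)-3}p^{e(H)-2}\bigr),
\]
treating $n^{v(H)-6}p^{e(H)-3}$ as a pointwise bound on the number of rooted copies of $H$ at $e$ that contain two prescribed edges $f_1,f_2$. That quantity is only the \emph{expectation} of a pair-extension count; in the relevant regime $n^{-1/m_2(H)}\ll p\ll n^{-1/m_2(S)}$ it is typically $o(1)$ for individual pairs, so a union bound over $\Theta(n^6)$ pairs cannot upgrade it to a uniform estimate, and the adversary chooses where the color coincidences land after seeing $G(n,p)$. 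The natural repair --- fix $\phi$ on $V(H)\setminus\{b\}$ for an endpoint $b$ of the second repeated edge and use properness to force $\phi(b)$ --- gives a bound of order $n^{v(H)-3}p^{e(H)-1-d_H(b)}$, which beats $N_e\asymp n^{v(H)-2}p^{e(H)-1}$ only when $np^{d_H(b)}\to\infty$, i.e.\ when $d_H(b)\,\beta(H,S)<1$. This can fail under the theorem's hypotheses: take $H=C_5$ with one pendant edge (so $m_2(H)=4/3\in(1,2)$) and $(F,S)=(B_t,B_{3t-2})$ as in Corollary~\ref{booksaregreat}. For a pair of vertex-disjoint cycle edges every admissible vertex $b$ has $d_H(b)=2$, and
\[
2\beta(H,B_{3t-2}) = \frac{6t-5/2}{6t-3} > 1
\]
for every $t\ge 1$, so $np^2\to 0$ and the estimate collapses. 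Your forbidden-color bound has the same flaw, with $n^{v(H)-4}p^{e(H)-2}$ playing the same illicit role. Consequently the claim that a positive fraction of the rooted copies of $H$ at a typical edge survive is not established against an adversarial coloring, and it is exactly this difficulty that the paper's color-splitting, $\disc$-based pseudo-randomness (Lemmas~\ref{lemma:rainbowcycles} and~\ref{lemma:discGnp}), and K\L{}R embedding are designed to overcome.
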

	
	We remark that the definition of $\beta$ comes from a more general definition from~\cite{KK}. This parameter is defined to ensure that the number of copies of $S$ in $G(n,p)$ is of order $\Omega(n^{2-1/m_2(H)})$, which is essential for our proof (see Section 2).
	
	As an application of Theorem~\ref{thm:main} we present an infinite family of graphs $H$ such that $n^{-1/m_2(H)}$ is not a threshold for $G(n,p)\flecha H$.
	For any $t \in \mathbb{N}$, let $B_t$ the $t$-book graph, consisted of $t$ triangles sharing a common edge. We prove that $B_{3t-2}\flecha B_t$ and, since $m_2(B_t)=2$, that $B_t\oplus H$ satisfies the hypothesis of Theorem~\ref{thm:main} for every positive integer $t$ and every graph $H$ with $m_2(H) \in (1,2)$.
	
	\begin{cor}
		\label{booksaregreat}
		For every $t\in \mathbb{N}$ and every graph $H$ with $m_2(H)\in (1,2)$, we have
		\[
          p_{B_t\oplus H}^{\mathrm{rb}}=o\big(n^{-1/m_2(B_t\oplus H)}\big)
        \]
		
		\noindent for any $2$-labeled copies of $B_t$ and $H$.
	\end{cor}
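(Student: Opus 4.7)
The plan is to apply \cref{thm:main} with $F=B_t$ and $S=B_{3t-2}$. Since $m_2(B_t)=2$, the corollary's hypothesis $m_2(H)\in(1,2)$ already gives $1<m_2(H)<m_2(F)$; it then remains to verify the two qualitative conditions, namely that $B_{3t-2}$ is $2$-balanced and that $B_{3t-2}\flecha B_t$, and finally to check that the resulting exponent $\beta(H,B_{3t-2})$ strictly exceeds $1/m_2(B_t\oplus H)$.

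Each book $B_k$ has $k+2$ vertices and $2k+1$ edges, so $(e(B_k)-1)/(v(B_k)-2)=2$. Any subgraph of $B_k$ with at least three vertices either retains the spine edge $uv$ and is itself a smaller book of density $2$, or omits $uv$ and sits inside a double star on the remaining vertices, giving density strictly less than $2$. Hence $m_2(B_{3t-2})=2$ is attained by $B_{3t-2}$ itself, so $B_{3t-2}$ is $2$-balanced. A similar case split on subgraphs of $B_t\oplus H$, decomposing each as $J=J_1\cup J_2$ with $J_1\subseteq B_t$ and $J_2\subseteq H$, together with $m_2(H)<2$, gives $m_2(B_t\oplus H)=2$.

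The main obstacle is the deterministic anti-Ramsey claim $B_{3t-2}\flecha B_t$. Fix a proper edge-coloring $c$ of $B_{3t-2}$ with spine $uv$ and pages $w_1,\dots,w_{3t-2}$, and set $a_i:=c(uw_i)$, $b_i:=c(vw_i)$. Properness forces the $a_i$ to be pairwise distinct and each different from $c(uv)$, the same for the $b_i$, and also $a_i\neq b_i$ for every $i$. So the only way $t$ pages $w_{i_1},\dots,w_{i_t}$ can fail to span a rainbow $B_t$ together with $u,v$ is through a cross-collision $a_i=b_j$ for some distinct $i,j\in\{i_1,\dots,i_t\}$. Define a digraph $D$ on $[3t-2]$ with an arc $i\to j$ whenever $a_i=b_j$; since the $a_i$ are distinct and so are the $b_j$, every vertex of $D$ has in- and out-degree at most $1$, so the underlying simple graph has maximum degree at most $2$. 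Any such graph on $N$ vertices is $3$-colorable and hence contains an independent set of size $\lceil N/3\rceil$; with $N=3t-2$ this yields an independent set of size $t$, whose corresponding pages span a rainbow $B_t$.

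With $v(B_{3t-2})=3t$ and $e(B_{3t-2})=6t-3$, \cref{thm:main} then gives $p_{B_t\oplus H}^{\mathrm{rb}}\le Cn^{-\beta}$ with
\[
  \beta=\frac{1}{6t-3}\left(3t-2+\frac{1}{m_2(H)}\right)=\frac{1}{2}+\frac{1}{6t-3}\left(\frac{1}{m_2(H)}-\frac{1}{2}\right).
\]
Since $m_2(H)<2$, the second summand is strictly positive, so $\beta>1/2=1/m_2(B_t\oplus H)$ and therefore $n^{-\beta}=o(n^{-1/m_2(B_t\oplus H)})$, as required.
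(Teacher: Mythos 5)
Your proof is correct and assembles the same ingredients as the paper (apply \cref{thm:main} with $F=B_t$, $S=B_{3t-2}$, then compare $\beta$ to $1/m_2(B_t\oplus H)$), but it replaces the paper's inductive proof of the key deterministic lemma $B_{3t-2}\flecha B_t$ (the paper's Lemma 6.1) with a genuinely different argument. You encode the potential cross-collisions $a_i=b_j$ as a digraph with in- and out-degree at most one, so the underlying graph has maximum degree $\leq 2$, is $3$-colorable, and therefore has an independent set of size $\lceil(3t-2)/3\rceil=t$; those pages span a rainbow $B_t$. This is a cleaner and more transparent argument than the paper's induction (which is somewhat garbled as written). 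For the remaining steps, the two proofs essentially coincide: both compute $v(B_{3t-2})=3t$, $e(B_{3t-2})=6t-3$, and derive $\beta(H,B_{3t-2})>1/2$ from $m_2(H)<2$. The only deviation is that you assert $m_2(B_t\oplus H)=2$ exactly — this is true but your one-sentence decomposition sketch is doing real work, and a subgraph of $B_k$ containing the spine is not literally ``a smaller book'' but rather a subgraph of one, so the case analysis needs a touch more care. The paper sidesteps this entirely by observing only the one-sided bound $m_2(B_t\oplus H)\geq m_2(B_t)=2$ (since $B_t\subseteq B_t\oplus H$), which is all that is needed. Both routes are valid.
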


	The paper is organized as follows:
	in Section~\ref{sec:overview} we give an overview of the proof of Theorem~\ref{thm:main} ;
	in Section~\ref{sec:tools} we recall some definitions and results on the Regularity Method;
	In Section~\ref{sec:colorpartition} we explore properties of proper colorings of $G(n,p)$;
	in Section~\ref{section:main} we prove Theorem~\ref{thm:main}; and
	in Section~\ref{sec:split} we prove Corollary \ref{booksaregreat}.

	\section{Overview of the proof}\label{sec:overview}
	
	We aim to show that, under the hypothesis of Theorem~\ref{thm:main}, there exists $C > 0$ such that, for $p\geq Cn^{-\beta(H,S)}$, every proper edge-coloring of $G(n,p)$ contains a rainbow copy of $F\oplus H$ with high probability.
	A natural strategy is to partition the colors into $e(F \oplus H)$ classes and find a copy of $F\oplus H$ so that each of its edges belongs to a different class and, consequently, is rainbow. One good attempt would be to make a random partition of the colors, while trying to prove that the graphs induced by each class of colors is pseudo-random.
	This is indeed the approach used in~\cite{KKM14} to provide an upper bound on the
	threshold of this anti-Ramsey property for any graph.
	
	However, such a general strategy cannot work since we are dealing with lower densities than the one determined by $m_2(F\oplus H)$, which is a barrier to applying an embedding lemma for random graphs such as Theorem \ref{thm:KLR}, formerly known as the K\L R Conjecture.
	Moreover, by the results of~\cite{NPSS,BCMP, KMPS} we know that the $m_2$-density in fact determines the threshold for cliques and cycles, so if we work with lower densities we must show that the
	imbalance of $F\oplus H$ plays a role in our proof. Recall that we consider a graph $S$ such that $S\flecha F$. By finding many copies of $S$, we will get many rainbow copies of $F$, which we aim to extend to a rainbow copy of $F\oplus H$ using the Theorem \ref{thm:KLR}.

	Given a realization $G$ of $G(n,p)$ and a proper edge-coloring $c:E(G)\rightarrow \mathbb{N}$, for each color $i \in e(E(G))$ we assign
	independently and uniformly at random an edge $\sigma(i) \in E(H)$.
	Let $V(H)=\{u_1,u_2,\ldots, u_h\}$ be a fixed labeling of
	the vertices of $H$.
	For each $u_iu_j \in E(H)$, we denote by $G_{u_iu_j}$ the graph with the edge set $E(G_{u_iu_j}) = \{e \in E(G): \sigma(c(e)) = u_iu_j\}$.
	That is, $E(G_{u_i u_j})$ are those edges of $E(G)$ for which their color was assigned to $u_i u_j$.

	Let $V(S)=\{v_1,v_2\cdots,v_s\}$ be a fixed labeling of
	the vertices of $S$ in which $\{v_1,\cdots,v_f\}$ form a copy of $F$.
	Consider $F \oplus H$ and $S \oplus H$ to be the amalgamations obtained by identifying $u_{1}$ with $v_{1}$ and $u_{2}$ with $v_{2}$.
	We will look for a copy $S' \oplus H'$ of
	$S \oplus H$ in $G$ such that the following holds.
	If $e \in E(H')$ is identified with $u_iu_j$, then $c(e) \in \sigma^{-1}(u_iu_j)$;
	if $e \in E(S')$, then $c(e) \in \sigma^{-1}(u_1u_2)$.
	As $S \flecha F$ and the sets $\sigma^{-1}(u_iu_j)$ are disjoint, such colored copy of $S \oplus H$ will give us a rainbow
	copy of $F \oplus H$.

	We start by fixing an equitable partition of the vertices of $G$ as follows:

    \begin{align*}
		V(G) =\bigcup_{i=1}^{s} V_i \cup \bigcup_{i=3}^{h} U_i.
	\end{align*}
	For each $i$, the set $V_i$
	corresponds to the vertex $v_{i}$ in $S$
	and the set $U_i$ corresponds to the vertex $u_{i}$
	in $H$.
	By convenience, we also set $U_{1} = V_{1}$ and $U_{2} = V_{2}$.
	Denote by $G[V_{1},\ldots,V_{s}]$ the graph formed by
	the edges of $G$ whose endpoints belong to different
	sets $V_{i}$ and $V_{j}$.
	First, we prove that with high probability there exist

    \begin{align}\label{eq:betareason}
		\Omega( n^{v(S)} p^{e(S)} ) = \Omega(n^{2 - 1/m_2(H)} )
	\end{align}
	edge-disjoint transversal copies of $S$ in $G[V_1,\ldots,V_s]$, that is, copies of $S$ in which each vertex
	belongs to a different set $V_{i}$.
	Since $S$ is $2$-balanced and $m_{2}(S)>m_{2}(H)$, we have $p\leq n^{-1/m_2(S)}$ and hence most copies of $S$ in $G$ are \emph{isolated}, that is,
	most copies do not share edges with any other copies.
	We guarantee that a positive proportion of these copies are present in $G_{u_1u_2}$ by applying Azuma's inequality (cf. Lemma~\ref{lemma:azuma}).
	In this argument we control the influence of each color in the number of copies of $S$ in $G_{u_1u_2}$ by recalling that each color forms a matching and using that the copies are isolated.

	Let $G_{u_{1}u_{2}}^{S}[V_{1},\ldots,V_{s}]$ be the graph induced by the edges of
	the isolated transversal copies of $S$ in $G_{u_{1}u_{2}}[V_{1},\ldots,V_{s}]$.
	We show that there exists a bipartite subgraph
	$B_{12} = B_{12}[W_{1},W_{2}]$ of $G_{u_{1}u_{2}}^{S}[V_{1},V_{2}]$
	which is $(\varepsilon,q)$-regular and has $\lfloor\alpha n^{2}q\rfloor$ edges, where $\alpha,\varepsilon \in (0,1)$ and $q = B^{e(S)}n^{-1/m_{2}(H)}$.
	The density of $B_{12}$ comes from~\eqref{eq:betareason}, 
	while the regularity comes from Lemma~\ref{tools:lemma2} combined with Lemma~\ref{lemma:upperuniff}.
	Relabeling the sets $V_{1},\ldots,V_{s}$ if
	necessary, we also show that each $e \in E(B_{12})$ is contained in an isolated copy of $S$ such that $e$ corresponds to $u_{1}u_{2}$.
	Moreover, the sets $W_{1} \subseteq V_{1}$ and $W_{2} \subseteq V_{2}$ have equal size and contain a constant
	proportion of the vertices in $V_{1}$ and $V_{2}$,
	respectively.
	
	Now, the plan is to apply Theorem \ref{thm:KLR}
	to find a rainbow copy of $H$ in $G[U_1,\ldots,U_h]$
	which contains one edge in $B_{12}$
	and, apart from this edge, only uses colors outside $c^{-1}(u_1u_2)$.
	To do so, we first prove that for all $j > 2$ and $i < j$ the following holds.
	There exists a bipartite subgraph
	$B_{ij} = B_{ij}[W_{i},W_{j}]$ of $G_{u_{i}u_{j}}(U_{i},U_{j})$
	which is $(\varepsilon,q)$-regular and has $\lfloor\alpha n^{2}q\rfloor$ edges with $|W_{i}| = |W_{1}|$ for all $1 \le i \le h$.
	The density of $B_{ij}$ comes from the
	concentration of the degrees in $G_{u_{i}u_{j}}$ 
	(cf.~Lemma~\ref{lemma:concdeg}), using that $m_2(H)>1$ and consequently that $q\gg\log n/n$.
	To prove $(\varepsilon,q)$-regularity, we use Azuma's inequality to
	control the number of cycles of a fixed even length 
	and apply a well-know theorem of
	Chung and Graham (cf. Lemma~\ref{lemma:ChGr}).
	For more details, see Lemmas~\ref{lemma:disc} and~\ref{lemma:discGnp}.
	
	By choosing $B$ larger enough, we apply Theorem \ref{thm:KLR} to deduce that the probability that $\cup_{ij} B_{ij}$ does not contain a transversal copy of $H$ is at most of order $\exp(-qn^2)$. Since $m_{2}(H)> 1$ and consequently $qn^{2} \gg n$, this bound is enough to account for the number of choices for the sets $W_{1},\ldots,W_{h}$.
	This implies that the desired rainbow copy of $H$ in 
	$G[U_{1},\ldots,U_{h}]$ exists with high probability.
	As $S \flecha F$ and each $e \in E(B_{12})$ is contained in a copy of $S$ so that $e$ corresponds to $u_{1}u_{2}$, 
	we conclude that there is a rainbow copy of $F \oplus H$ in $G$.

	\section{Tool box}\label{sec:tools}
	
	For any graph $G$ and disjoint subsets $U,V\se V(G)$, 
	define the \emph{density} of the pair $(U,V)$ in $G$ to be
	\[
      d_{G}(U,V) = \frac{e_{G}(U,V)}{|U||V|},
	\]
	where $e_{G}(U,V)$ denotes the number of edges across $U$ and $V$.
	We suppress $G$ from the notation whenever it is clear from context.

	For any $\mu,p \in \mathbb{R}$, we say that $G$ is $(\mu,p)$-\emph{upper uniform} if
	\[
      d_{G}(U,V)\leq (1+\mu)p,
	\]
	for every disjoint pair of sets $U,V\subseteq V(G)$ such that $|U|,|V|\geq \mu v(G)$.
	If $G=G[U,V]$,
	we say that $G$ is $(\eps,p)$-\emph{regular} if
	\[
      |d_{G}(U,V) - d_{G}(U',V')| \leq \eps p,
	\]
	for all $U' \subseteq U$ and $V' \subseteq V$ with $|U'| \ge \eps |U|$ and $|V'|
	\ge \eps |V|$.

	The next lemma states that large induced graphs of regular subgraphs are still regular. The proof is straightforward by checking the definition.

    \begin{lemma}\label{lemma:slicing}
		Let $p\in \interval[open left]{0}{1}$ and $0 < \eps < \mu < 1/2$. If $G[U,V]$ is a $(\eps,p)$-regular bipartite
		graph, then for every $W \subseteq U$, with $|W| \geq \mu|U|$, the graph $G[W,V]$ is
		$(\eps/\mu,p)$-regular. Furthermore, we have $d(W,V)\geq d(U,V)-\eps p$.
	\end{lemma}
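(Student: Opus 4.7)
The plan is to verify both assertions directly from the definitions of regularity. Let $W \subseteq U$ with $|W| \geq \mu |U|$. Since $\mu \leq 1/2 \leq 1$, in particular $|W| \geq \mu |U| \geq \varepsilon |U|$, so the pair $(W,V)$ itself is a valid test pair for the $(\varepsilon,p)$-regularity of $G[U,V]$. Applying the definition to $W \subseteq U$ and $V \subseteq V$ immediately gives
\[
|d(W,V) - d(U,V)| \leq \varepsilon p,
\]
which yields the \emph{furthermore} statement $d(W,V) \geq d(U,V) - \varepsilon p$.

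For the main claim, I would pick arbitrary subsets $W' \subseteq W$ and $V' \subseteq V$ with $|W'| \geq (\varepsilon/\mu)|W|$ and $|V'| \geq (\varepsilon/\mu)|V|$, and check that the pair $(W',V')$ satisfies the density condition $|d(W',V') - d(W,V)| \leq (\varepsilon/\mu)p$. The key observation is the chain
\[
|W'| \geq (\varepsilon/\mu)|W| \geq (\varepsilon/\mu)\cdot \mu|U| = \varepsilon |U|,
\]
while $|V'| \geq (\varepsilon/\mu)|V| \geq \varepsilon|V|$ since $\varepsilon < \mu$ implies $\varepsilon/\mu < 1$ is not needed here, but in any case $\varepsilon/\mu \geq \varepsilon$. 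Thus $(W',V')$ is a valid test pair for the original $(\varepsilon,p)$-regularity of $G[U,V]$, giving $|d(W',V') - d(U,V)| \leq \varepsilon p$.

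Combining this with the inequality from the first paragraph via the triangle inequality yields
\[
|d(W',V') - d(W,V)| \leq |d(W',V') - d(U,V)| + |d(U,V) - d(W,V)| \leq 2\varepsilon p.
\]
Since $\mu < 1/2$, we have $2\varepsilon p < (\varepsilon/\mu) p$, which completes the proof. There is no real obstacle: the only thing to double-check is that the slack $2\varepsilon \leq \varepsilon/\mu$ is genuinely provided by the hypothesis $\mu < 1/2$, which it is.
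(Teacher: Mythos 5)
Your proof is correct and is exactly the ``straightforward check of the definition'' that the paper alludes to without writing out: apply $(\eps,p)$-regularity of $G[U,V]$ to the test pairs $(W,V)$ and $(W',V')$, combine via the triangle inequality to get the bound $2\eps p$, and observe that $\mu \leq 1/2$ turns $2\eps p$ into $(\eps/\mu)p$. The inclusions $|W'| \geq (\eps/\mu)|W| \geq \eps|U|$ and $|V'| \geq (\eps/\mu)|V| \geq \eps|V|$ are verified correctly. One stylistic remark: the parenthetical ``since $\eps < \mu$ implies $\eps/\mu < 1$ is not needed here'' is a bit garbled; what you actually use for the $V'$ side is just $\eps/\mu \geq \eps$, equivalently $\mu \leq 1$, which holds. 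Everything else is clean.
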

	
	The next lemma states that regular bipartite graphs contain regular subgraphs with any given (sufficiently large) number of edges. A proof can be found in~\cite[Lemma 4.3]{GeSt}.

    \begin{lemma}\label{lemma:edgereg}
		For $\eps \in (0,1/6)$, there exists $C=C(\eps) > 0$ such that the following holds. Let $G=G[V_1, V_2]$ be a $(\eps,d_{G})$-regular bipartite graph, where $d_{G}=d_{G}(V_{1},V_{2})$. 
		For all $Cn\leq m\leq e(G)$, there exists a $(2\eps,d_{H})$-regular subgraph $H=H[V_1, V_2]$ of $G$ such that $e(H)=m$, where $d_H=e(H)/|V_1||V_2|$.
	\end{lemma}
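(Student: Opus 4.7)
The plan is to take $H$ to be a uniformly random size-$m$ subset of $E(G)$ and show that with positive probability the resulting bipartite subgraph is $(2\varepsilon,d_H)$-regular. By construction $e(H)=m$ and $d_H=m/(|V_1||V_2|)$, so only the regularity condition actually needs to be checked.

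First, I would fix a candidate pair $W_1\subseteq V_1$, $W_2\subseteq V_2$ with $|W_i|\geq\varepsilon|V_i|$, and study $X:=e_H(W_1,W_2)$. Since the edges of $H$ form a uniform $m$-sample from $E(G)$, $X$ is hypergeometric with
\[
\mathbb{E}[X]=m\cdot\frac{e_G(W_1,W_2)}{e(G)}=d_H|W_1||W_2|\cdot\frac{d_G(W_1,W_2)}{d_G}.
\]
The $(\varepsilon,d_G)$-regularity of $G$ yields $|d_G(W_1,W_2)-d_G|\leq\varepsilon d_G$, so the expected density of $H$ across the pair already lies within $\varepsilon d_H$ of $d_H$. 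It therefore suffices to guarantee, simultaneously over all witness pairs, the concentration $|X-\mathbb{E}[X]|\leq\varepsilon d_H|W_1||W_2|$.

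For this I would invoke a Chernoff-type tail bound for the hypergeometric distribution (e.g.\ via Hoeffding's inequality, which lets us pass to the binomial with the same mean) to obtain
\[
\mathbb{P}\bigl[|X-\mathbb{E}[X]|\geq\varepsilon d_H|W_1||W_2|\bigr]\leq 2\exp\bigl(-c\varepsilon^2 d_H|W_1||W_2|\bigr)
\]
for some absolute constant $c>0$. Using $|W_i|\geq\varepsilon|V_i|$ and $d_H|V_1||V_2|=m$ the exponent is at most $-c\varepsilon^4 m$, and since there are at most $4^n$ candidate pairs, a union bound closes out provided $c\varepsilon^4 m\geq 3n$; this is achieved by setting $C(\varepsilon):=3/(c\varepsilon^4)$.

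Combining both deviations, with positive probability every witness pair satisfies $\left|\frac{e_H(W_1,W_2)}{|W_1||W_2|}-d_H\right|\leq 2\varepsilon d_H$, so $H$ is $(2\varepsilon,d_H)$-regular with exactly $m$ edges. The only real technical point is extracting the hypergeometric tail bound in a usable form; once that is in place, everything else is a routine first-moment plus union-bound calculation, and the restriction $\varepsilon<1/6$ is used merely to ensure that $2\varepsilon$ remains in the regime where $(2\varepsilon,d_H)$-regularity is a substantive condition.
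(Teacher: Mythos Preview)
Your proposal is correct and follows the standard route: the paper does not prove this lemma at all but simply cites \cite[Lemma~4.3]{GeSt}, and the Gerke--Steger argument is precisely the one you sketch---sample a uniformly random $m$-subset of $E(G)$, use hypergeometric (Chernoff/Hoeffding) tails to control $e_H(W_1,W_2)$ on each witness pair, and close with a union bound over the at most $4^n$ pairs. One small cosmetic point: for $(2\varepsilon,d_H)$-regularity you only need to test pairs with $|W_i|\ge 2\varepsilon|V_i|$, so your check at the $\varepsilon|V_i|$ threshold is slightly stronger than required, but this only helps.
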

	
	The following lemma can be found in~\cite[Lemma 6]{KKM18}. It states that upper
	uniform bipartite graphs contain a bipartite subgraph which is regular and has
	the same density.
	
	\begin{lemma}\label{tools:lemma2}
		For $\eps \in (0, 1/2)$ and $\alpha \in (0,1)$, there exists $\mu>0$ such that the following holds for all $p \in (0, 1]$.
		Let $G=G[V_{1},V_{2}]$ be a $(\mu,p)$-upper uniform bipartite graph, with $|V_1| = |V_2|$ and $d(V_{1},V_{2}) \geq \alpha p$.
		There exist
		$V'_{1} \subseteq V_{1}$ and $V'_{2} \subseteq V_{2}$, with $|V_{1}'|,|V_{2}'|\ge \mu |V_{1}|$, such that $G[V'_{1},V'_{2}]$ is $(\eps,p)$-regular with density at least $\alpha p$.
	\end{lemma}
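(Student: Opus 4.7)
The plan is to prove \cref{tools:lemma2} by an iterative refinement argument analogous to the Szemer\'edi regularity lemma, halted early so that the cells stay large. Fix $\mu = \mu(\varepsilon,\alpha)>0$ to be chosen later and set $\mathcal{P}_0 = \{(V_1,V_2)\}$. At step $t$, I check whether every pair $(U_1,U_2)\in\mathcal{P}_t$ with $|U_1|,|U_2|\ge \mu|V_1|$ is $(\varepsilon,p)$-regular; if not, I choose such an irregular pair together with a witness $(W_1,W_2)$ (so $|W_i|\ge\varepsilon|U_i|$ and $|d(W_1,W_2)-d(U_1,U_2)|\ge\varepsilon p$), and refine by splitting $U_1 = W_1\cup(U_1\setminus W_1)$, $U_2 = W_2\cup(U_2\setminus W_2)$ to obtain $\mathcal{P}_{t+1}$.

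To bound the number of iterations I would track the mean-square density
\[
\Phi(\mathcal{P}) = \sum_{(U_1,U_2)\in\mathcal{P}} \frac{|U_1||U_2|}{|V_1||V_2|}\, d(U_1,U_2)^2,
\]
which by $(\mu,p)$-upper uniformity is at most $(1+\mu)^2 p^2$. A standard variance calculation shows that each refinement of an irregular pair whose both sides exceed $\mu|V_1|$ increases $\Phi$ by at least $\mu^2\varepsilon^4 p^2$: the variance of the density across the four sub-cells is at least $\varepsilon^2(\varepsilon p)^2$ (the $(W_1,W_2)$ corner has relative weight at least $\varepsilon^2$ and density deviating from the mean by at least $\varepsilon p$), multiplied by the parent factor $|U_1||U_2|/(|V_1||V_2|)\ge\mu^2$. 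Thus the procedure halts after $T = O(\mu^{-2}\varepsilon^{-4})$ steps, producing a final partition of size $N\le 4^T$ in which every pair with both sides of size at least $\mu|V_1|$ is $(\varepsilon,p)$-regular.

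To extract a regular pair of density at least $\alpha p$, I would expand
\[
\alpha p \le d(V_1,V_2) = \sum_{(U_1,U_2)} \frac{|U_1||U_2|}{|V_1||V_2|}\, d(U_1,U_2),
\]
and bound the contribution of pairs with a side smaller than $\mu|V_1|$ by $N\cdot\mu\cdot(1+\mu)p$ using upper uniformity. Choosing $\mu$ small enough in terms of $N$ (which depends only on $\varepsilon$ and $\alpha$) makes this slack negligible compared to $\alpha p$, so averaging over the remaining (regular) pairs yields one of density at least $\alpha p$, which is the desired $(V_1',V_2')$.

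The main technical obstacle is the circular parameter choice: the requirement $\mu \ll 4^{-T}$ must be reconciled with $T = O(\mu^{-2}\varepsilon^{-4})$, forcing $\mu$ to be a tower-type function of $1/\varepsilon$ and $1/\alpha$. A smaller but persistent nuisance is handling witnesses $(W_1,W_2)$ for which $|U_i\setminus W_i|$ is very small; this is absorbed either by slightly enlarging $W_i$ within $U_i$ while preserving the density deviation, or by counting the tiny complement against the $\mu|V_1|$ budget of small cells.
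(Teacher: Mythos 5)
The paper does not actually prove this lemma; it cites it as \cite[Lemma~6]{KKM18}. So your proposal is the only proof on the table, and it has a genuine gap in the parameter bookkeeping.

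The energy increment you compute per refinement step carries a factor $\mu^2$, coming from the weight $|U_1||U_2|/(|V_1||V_2|)\ge\mu^2$ of the one pair you refine. This makes the number of iterations $T=O(\mu^{-2}\eps^{-4})$ depend on $\mu$. You then need $\mu$ small relative to the final partition size $N\le 4^T$ in order to (i) control the slack from small cells in the averaging step and (ii) keep all intermediate cells above the threshold $\mu v(G)$ so that upper uniformity can be used at all (without (ii) the bound $\Phi\le(1+\mu)^2p^2$ is simply false, since a cell of the form $(W_1,W_2)$ with $|W_1|=\eps|U_1|$ can already fall below $\mu|V_1|$ after a single refinement and then has density bounded only by $1$, not by $(1+\mu)p$). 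But the constraint $\mu<4^{-T}=4^{-\Theta(\mu^{-2}\eps^{-4})}$ has \emph{no} positive solution: as $\mu\to 0$ the right-hand side decays super-polynomially in $\mu$, and for $\mu$ bounded away from $0$ it is astronomically small. This is not a tower-type nuisance, as you suggest, but an unresolvable circularity; your ``main technical obstacle'' is fatal as stated.

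The standard way out is the one used in proofs of the sparse (Kohayakawa--R\"odl) regularity lemma: at each round refine \emph{all} irregular pairs simultaneously, maintaining genuine (equitable) partitions of $V_1$ and of $V_2$. The combined increment is then at least (the total weight of irregular pairs)$\cdot\eps^4p^2$, and if the current partition is not yet $\eps$-regular this weight is $\ge\eps$, so the per-round gain is $\Omega(\eps^5p^2)$ with \emph{no} factor of $\mu$. Hence $T=O(\eps^{-5})$ is fixed before $\mu$ is chosen; the final (and all intermediate) partitions have at most $K=K(\eps)$ parts of size roughly $|V_1|/K$, and one may then take $\mu<1/(2K)$, which simultaneously validates the upper-uniformity bound $\Phi\le(1+\mu)^2p^2$ at every round, ensures no cell falls below $\mu v(G)$, and makes the slack from irregular pairs negligible against $\alpha p$ in the final averaging. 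With these changes the outline becomes correct; as written, the $\mu^2$ in the increment and the unjustified energy upper bound break the argument.
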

	
	Let $\eps$ be any positive real number.
	We say that a graph $G$ has the \emph{discrepancy property}
	$\mathrm{DISC}(\eps)$ if for any subsets $U,V \subseteq V(G)$, we
	have \[\left|e_G(U,V) - \frac{\mathrm{vol}(U)\;
		\mathrm{vol}(V)}{\mathrm{vol}(V(G))}\right| \leq \eps \;
	\mathrm{vol}(V(G)),\] \smallskip where $\mathrm{vol}(X) := \sum_{x \in
		X}\deg_G(x)$ for any $X \subseteq V(G)$.
	Roughly speaking, if a graph $G$ has the $\mathrm{DISC}(\eps)$ property, then its edges are almost uniformly distributed. 
	The next lemma builds a bridge between discrepancy and classical regularity
	(see~\cite{KKM18}, Lemma 4).
	
	\begin{lemma}\label{lemma:disc}
		For every $\eps, \mu > 0$ there exist $\delta,\nu >0$ such that 
		the following holds.
		Let $p \in (0, 1]$ and $G$ be an $n$-vertex graph which satisfies

		\begin{enumerate}[label=(\arabic*),itemsep=5pt]
			\item the discrepancy property $\mathrm{DISC}(\delta)$;
			\item $|\deg(v) - pn| \le \nu pn$ for $v \in V(G)$.
		\end{enumerate}
		Then, for any disjoint subsets $U,V \subseteq V(G)$ such that $|U|,|V| \ge \mu n$, the graph $G[U,V]$ is $(\eps,p)$-regular.
	\end{lemma}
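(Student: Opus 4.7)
The plan is to translate the volume-based $\mathrm{DISC}(\delta)$ hypothesis into a density estimate of the form $\bigl|e_G(U,V) - p|U||V|\bigr| \le \gamma pn^{2}$, valid for every pair of disjoint subsets and some $\gamma = \gamma(\delta,\nu)$ that can be made arbitrarily small, and then read off $(\eps,p)$-regularity by dividing through by the product of set sizes. The bridge between the two formulations is precisely hypothesis (2).

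Concretely, condition (2) gives $\mathrm{vol}(X) = (1 \pm \nu)\, pn|X|$ for every $X \se V(G)$, and in particular $\mathrm{vol}(V(G)) = (1\pm \nu)pn^{2}$. A short calculation then shows that $\mathrm{vol}(U)\,\mathrm{vol}(V)/\mathrm{vol}(V(G))$ differs from $p|U||V|$ by at most $4\nu\, p|U||V| \le 4\nu pn^{2}$, for $\nu$ small. Combining this with $\mathrm{DISC}(\delta)$ and the triangle inequality yields $\bigl|e_G(U,V) - p|U||V|\bigr| \le \gamma pn^{2}$ with $\gamma := \delta(1+\nu) + 4\nu$, valid for \emph{every} disjoint pair (no size restriction is needed at this stage).

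Now fix disjoint $U,V \se V(G)$ with $|U|, |V| \ge \mu n$, and arbitrary $U' \se U$, $V' \se V$ with $|U'| \ge \eps|U|$, $|V'| \ge \eps|V|$, so that $|U'|,|V'| \ge \eps\mu n$. Applying the previous estimate to $(U',V')$ and dividing by $|U'||V'| \ge \eps^{2}\mu^{2}n^{2}$ gives $|d_G(U',V') - p| \le \gamma p/(\eps^{2}\mu^{2})$; applied to $(U,V)$ and divided by $|U||V| \ge \mu^{2}n^{2}$ it gives $|d_G(U,V) - p| \le \gamma p/\mu^{2}$. By the triangle inequality, $|d_G(U',V') - d_G(U,V)| \le 2\gamma p/(\eps^{2}\mu^{2})$, which is at most $\eps p$ provided $\gamma \le \eps^{3}\mu^{2}/2$. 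Choosing $\delta$ and $\nu$ sufficiently small in terms of $\eps$ and $\mu$ at the outset (e.g.\ $\delta,\nu \le \eps^{3}\mu^{2}/100$) secures this and completes the argument.

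The proof is essentially bookkeeping; the one conceptual point worth flagging is that $\mathrm{DISC}$ is phrased in terms of volumes — natural for spectral-type arguments — while regularity is phrased in terms of densities, and hypothesis (2) is precisely the near-regularity of degrees needed to trade between the two formulations. No embedding or probabilistic tool is required beyond this translation.
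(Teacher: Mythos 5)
Your proof is correct and complete. It is worth noting that the paper does not actually prove this lemma itself; it cites it to \cite{KKM18} (Lemma 4 there), so there is no in-paper argument to compare against. That said, your approach is the natural one and almost certainly mirrors the cited proof: use the degree condition (2) to show $\mathrm{vol}(X)=(1\pm\nu)pn|X|$, hence $\mathrm{vol}(U)\mathrm{vol}(V)/\mathrm{vol}(V(G)) = (1\pm O(\nu))\,p|U||V|$, combine with $\mathrm{DISC}(\delta)$ to get the additive estimate $\bigl|e_G(U,V)-p|U||V|\bigr|\le \gamma p n^2$ for every disjoint pair, and then divide by $|U'||V'|\ge \eps^2\mu^2 n^2$ to control the density deviations and close via the triangle inequality. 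The bookkeeping ($\gamma=\delta(1+\nu)+4\nu$, the choice $\delta,\nu\le\eps^3\mu^2/100$, and the observation that the $\gamma pn^2$ error is uniform over subset sizes so no lower bound on $|U'|,|V'|$ is needed until the final division) all checks out. One minor point you implicitly use but could state: since $U\cap V=\emptyset$, any $U'\se U$ and $V'\se V$ are automatically disjoint, so $e_G(U',V')$ coincides with the edge count in the bipartite graph $G[U,V]$, which is what the regularity definition refers to.
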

	
	We end this section with a probabilistic tool.
	We state Azuma's inequality as it was stated by McDiarmid~\cite{Di}.
	Below, the notation $[M]$ stands for $\{1,2,\ldots,M\}$.
	
	\begin{lemma}\label{lemma:azuma}
		Let $X_1, \ldots, X_M$ be independent random variables, with $X_i$ taking values on a finite set
		$A_i$ for each $i \in [M]$. Suppose that $f:\prod_{i=1}^{M} A_i \rightarrow \mathbb{R}$ satisfies
		$|f(x) - f(x')| \leq c_i$ whenever the vectors $x$ and $x'$ differ only in the $i$th coordinate.
		If $Y$ is the random variable given by $Y = f(X_1, \ldots, X_M)$, then, for any $a >0$,

        \begin{align*}
			\P(|Y - \E(Y)| > a) \leq 2 \exp\left\{- \frac{2a^2}{\sum_{i=1}^M {c_i}^2}\right\}.
		\end{align*}
	\end{lemma}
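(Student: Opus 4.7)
The plan is to prove the bound via the classical Doob martingale construction combined with a Hoeffding-type inequality for bounded martingale differences. Define the Doob martingale by $Y_{k} = \E(Y \mid X_{1}, \ldots, X_{k})$ for $0 \leq k \leq M$, so that $Y_{0} = \E(Y)$, $Y_{M} = Y$, and set $D_{k} = Y_{k} - Y_{k-1}$. Telescoping yields $Y - \E(Y) = \sum_{k=1}^{M} D_{k}$, with each $D_{k}$ having conditional mean zero given $X_{1}, \ldots, X_{k-1}$.

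The crucial step is to show that, conditionally on $X_{1}, \ldots, X_{k-1}$, the variable $D_{k}$ lies in an interval of length at most $c_{k}$ (rather than $2c_{k}$). For fixed values of $X_{1}, \ldots, X_{k-1}$, I would define
\[
L_{k} = \inf_{x \in A_{k}} \E\bigl(f(X_{1}, \ldots, X_{k-1}, x, X_{k+1}, \ldots, X_{M}) \mid X_{1}, \ldots, X_{k-1}\bigr),
\]
and $U_{k}$ analogously with the supremum. The bounded differences hypothesis forces $U_{k} - L_{k} \leq c_{k}$, while both $Y_{k}$ and $Y_{k-1} = \E(Y_{k} \mid X_{1}, \ldots, X_{k-1})$ lie in $[L_{k}, U_{k}]$, so $D_{k}$ is confined to an interval of length $c_{k}$. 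Hoeffding's lemma then gives $\E(e^{\lambda D_{k}} \mid X_{1}, \ldots, X_{k-1}) \leq \exp(\lambda^{2} c_{k}^{2}/8)$ for every $\lambda > 0$. Iterating this estimate from $k = M$ downwards via the tower property yields $\E(e^{\lambda(Y - \E(Y))}) \leq \exp\bigl(\lambda^{2} \sum_{k=1}^{M} c_{k}^{2}/8\bigr)$. Markov's inequality applied to $e^{\lambda(Y - \E(Y))}$ and the optimal choice $\lambda = 4a/\sum c_{k}^{2}$ give $\P(Y - \E(Y) > a) \leq \exp(-2a^{2}/\sum c_{k}^{2})$; applying the same argument to $-f$ followed by a union bound produces the stated two-sided inequality with the factor of $2$.

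The main obstacle is the tightness of the range estimate on $D_{k}$: a naive argument only guarantees $|D_{k}| \leq c_{k}$, allowing $D_{k}$ to span an interval of length $2c_{k}$, which worsens the constant in the exponent by a factor of $4$ and yields the weaker bound $\exp(-a^{2}/(2\sum c_{k}^{2}))$. The trick is that both $Y_{k}$ and $Y_{k-1}$ are trapped inside the same interval $[L_{k}, U_{k}]$ of length at most $c_{k}$, so their difference cannot occupy the full gap $2c_{k}$; this careful conditioning is what recovers the factor of $2$ appearing in the exponent of the stated bound.
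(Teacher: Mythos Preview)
Your argument is correct: this is the standard Doob-martingale proof of McDiarmid's bounded-differences inequality, including the sharp observation that both $Y_{k}$ and $Y_{k-1}$ lie in the common interval $[L_{k},U_{k}]$ of length at most $c_{k}$, which is precisely what yields the constant $2$ in the exponent rather than $1/2$.

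There is nothing to compare against, however, because the paper does not prove this lemma. It is stated as a quotation of McDiarmid's result and accompanied only by the citation~\cite{Di}; no argument is given in the paper itself. Your write-up is thus not an alternative to the paper's proof but a self-contained justification of a result the authors take as a black box.
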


\section{Pseudo-randomness and isolated copies}
\label{sec:colorpartition}

Recall from Section \ref{sec:overview} that we will consider a random partition of the colors of a proper coloring of $G(n,p)$. In this section we focus on exploring properties of the graphs induced by each color class of this partition. In particular, the proof of Theorem~\ref{thm:main} converges to an application of an embedding lemma in a sparse setting: the formerly-called K\L R conjecture, proved by Balogh, Morris and Samotij~\cite{BMS}. Therefore, the goal is to guarantee that the graphs that we consider fit in the requirements of that result. In order to state it, we need a little explanation.

Let $m$ and $n$ be positive integers with $m \leq n^2$ and let $\varepsilon>0$ and
$p \in [0,1]$. For a graph $H$ with $V(H) = [h]$, we denote by
$\G(H,n,m,p,\varepsilon)$ the family of graphs obtained in the following way. Consider
$h$ disjoint sets $V_1,\dots,V_h$, each of size $n$, and for each $ij\in E(H)$, add
$m$ edges between the pair $(V_i,V_j)$ such that the resulting bipartite graph
is $(\varepsilon,p)$-regular. We denote by $\G(H,n,m,p,\varepsilon)$ the collection of
all graphs obtained in this way. We say that a copy of $H$ in $G \in \G(H,n,m,p,\varepsilon)$  is
\emph{transversal} if the vertex $i$ is mapped to $V_i$ (we omit the dependence on the order of the $V_i$'s in the definition of $\mathcal{G}$). 
We denote by $\Gs(H,n,m,p,\varepsilon)$ the set of all graphs $G \in\G(H,n,m,p,\varepsilon)$ that do not contain a transversal copy of $H$. 

Now we are ready to state the K{\L}R conjecture.

\begin{thm}\label{thm:KLR}
	For every graph $H$ and every positive $\gamma$, there exist positive constants $B$, $n_0$ and $\varepsilon$ such that the following holds. For every $n\in \mathbb{N}$ with $n\geq n_0$ and $m\in \mathbb{N}$ with $m\geq B n^{2-1/m_2(H)}$,

	\begin{align*}
		\left| \Gs(H,n,m,m/n^2,\varepsilon) \right| \leq \gamma^m \binom{n^2}{m}^{e(H)}.
	\end{align*}
\end{thm}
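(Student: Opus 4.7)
The plan is to deduce Theorem~\ref{thm:KLR} via the hypergraph container method of Balogh, Morris and Samotij~\cite{BMS}. First, I would encode transversal copies of $H$ as hyperedges of an auxiliary $e(H)$-uniform hypergraph $\mathcal{H}$: its vertex set is the set of $e(H)\cdot n^2$ potential edges between the pairs $(V_i,V_j)$ with $ij\in E(H)$, and its hyperedges are the edge sets of the transversal copies of $H$ in the complete multipartite blow-up on $V_1,\ldots,V_h$. Every $G\in\Gs(H,n,m,m/n^2,\varepsilon)$ then corresponds to an independent set $I(G)\subseteq V(\mathcal{H})$ of size $m\cdot e(H)$, whose projection to each pair $V_i\times V_j$ has cardinality $m$ and is $(\varepsilon,m/n^2)$-regular as a bipartite graph.

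The second step is to verify the co-degree hypotheses required by the container theorem. For every $\ell\geq 1$ and every set $T$ of $\ell$ potential edges spanning a subgraph $J\subseteq H$, one has $v(J)\geq 2+(\ell-1)/m_2(H)$ by the very definition of the $2$-density; hence the number of hyperedges of $\mathcal{H}$ extending $T$ is at most $n^{h-v(J)}=O\bigl(n^{h-2-(\ell-1)/m_2(H)}\bigr)$. Combined with $|E(\mathcal{H})|=\Theta(n^h)$ and $|V(\mathcal{H})|=e(H)n^2$, these co-degree bounds drive the container theorem to produce a family $\mathcal{C}$ of ``containers'' with $\log|\mathcal{C}|=O\bigl(n^{2-1/m_2(H)}\log n\bigr)$, such that every independent set of $\mathcal{H}$ is contained in some $C\in\mathcal{C}$ and each $\mathcal{H}[C]$ contains very few hyperedges compared to $|E(\mathcal{H})|$.

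The closing step is a sparse counting lemma for $(\varepsilon,p)$-regular transversal tuples: whenever $(G_{ij})_{ij\in E(H)}$ is a tuple of $(\varepsilon,p)$-regular bipartite graphs with $m\geq B\,n^{2-1/m_2(H)}$ edges each, and each $G_{ij}$ sits inside $C\cap(V_i\times V_j)$, then $\bigcup G_{ij}$ contains $\Omega(n^h p^{e(H)})$ transversal copies of $H$. For $B$ large and $\varepsilon$ small this quantity exceeds the container's hyperedge budget, so no such tuple fits inside a single container. A crude count of the number of $(\varepsilon,p)$-regular configurations inside each container, multiplied by $|\mathcal{C}|$, then yields the claimed bound $\gamma^m\binom{n^2}{m}^{e(H)}$, with the $\gamma^m$ savings coming from trading the container bound $\log|\mathcal{C}|=O(m\log(1/p))$ against the trivial per-container count.

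The main obstacle will be the sparse counting step. In the regime $p\ll 1$, an $(\varepsilon,p)$-regular tuple need not contain the ``expected'' $\Theta(n^h p^{e(H)})$ copies of $H$, and proving it does requires either iteration of the container machinery itself or a delicate stability-type supersaturation argument showing that an $(\varepsilon,p)$-regular tuple cannot concentrate on sparse sub-configurations of $\mathcal{H}$. The calibration of the container parameter is also subtle: its boundary size must match precisely the threshold $n^{2-1/m_2(H)}$ that appears in the hypothesis, which is exactly the density at which regular transversal tuples reliably begin to contain copies of $H$.
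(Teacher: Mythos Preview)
The paper does not prove Theorem~\ref{thm:KLR} at all: it is quoted as a known result, attributed to Balogh, Morris and Samotij~\cite{BMS}, and used as a black-box embedding tool in Section~\ref{section:main}. There is therefore no ``paper's own proof'' to compare your proposal against.

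For what it is worth, your outline is in the spirit of the container-based argument in~\cite{BMS}: one does encode transversal copies of $H$ as hyperedges, and the co-degree computation you give, namely $\Delta_\ell(\mathcal{H})=O(n^{h-2-(\ell-1)/m_2(H)})$, is exactly the input that makes the container parameter land at the threshold $n^{2-1/m_2(H)}$. The place where your sketch is genuinely incomplete is the last step. What is needed is not a sparse counting lemma saying that an $(\varepsilon,p)$-regular tuple contains $\Omega(n^h p^{e(H)})$ transversal copies of $H$ --- that is precisely the open statement one is trying to derive --- but rather a supersaturation/structure statement about the \emph{containers}: each container $C$ must have the property that any $(\varepsilon,p)$-regular bipartite graph sitting entirely inside $C\cap(V_i\times V_j)$ has at most $(1-\delta)n^2 p$ edges for some pair $ij$, which then yields a saving of $(1-\delta)^m$ per container in the count. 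Your proposal conflates this with an embedding lemma and, as you yourself flag, does not explain how to obtain it; in~\cite{BMS} this is handled by iterating the container theorem and exploiting balanced supersaturation, not by a counting lemma of the type you describe.
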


\subsection{Random partition of the colors}\label{subsec:randcolor}\hspace*{\fill}

In this subsection, we prove that, in a typical outcome of $G(n,p)$, the hypothesis of Lemma~\ref{lemma:disc} is met by the graphs induced by the colors assigned to each edge of $H$.
Let $G$ be a graph, let $c: E(G) \to \mathbb{N}$ be a proper edge-coloring of $G$ and let $T$ be a positive
integer. To each color $i \in \mathbb{N}$, we assign to $i$ an element $\sigma(i) \in [T]$ chosen
uniformly at random from $[T]$ and denote by $\mathbb{P}_T$ this distribution. For each $t \in [T]$, let $G_{t}$ be the spanning subgraph of $G$
with edge set

\begin{align*}
	E(G_t) = \{ e\in E(G): \sigma(c(e)) = t \}.
\end{align*}
That is, $E(G_t)$ corresponds to the edges of $G$ for which their color was assigned to $t$.

In our problem, the graph $G$ is an outcome of $G(n,p)$. This makes us consider two probability spaces when dealing with $G_t$: the one which defines $\gnp$
and the one which defines the random assignment $\sigma$ for a fixed proper edge-coloring of $\gnp$. In order to avoid confusion, we use $\mathbb{P}$ and $\mathbb{E}$ to refer to the distribution of $\gnp$.
For a fixed proper-coloring of $c: E(G) \to \mathbb{N}$, we use $\mathbb{P}_T$ and $\mathbb{E}_T$ to refer to the distribution of the random assignment of colors.
We observe that if $\A$ is an event depending on $G_t$, for some $t$, then $\mathbb{P}_T(\A)$ is a random variable in $G(n,p)$. 
In the lemmas, $o(1)$ denotes a function tending to $0$ as $n \to +\infty$. 
Whenever we say that an event occurs `with high probability', we mean with probability $1-o(1)$.

Our aim in this subsection is to show that $G_t$ satisfies the two requirements of Lemma~\ref{lemma:disc}, which are the concentration of degrees and $\mathrm{DISC}(\varepsilon)$. We omit the proof of the first, since it follows by a straightforward Chernoff's bound argument, together with the fact that edges touching each vertex have distinct colors. The proof that $G_t$ satisfies $\mathrm{DISC}(\varepsilon)$ is more intricate and relies on results of Chung and Graham~\cite{ChGr} on pseudo-random graphs.

\begin{lemma}\label{lemma:concdeg}

	Let $\delta > 0$ and $T$ be a positive integer. If $p \gg \log{n}/n$, then the following holds for $G
	= G(n,p)$ with high probability. For any proper edge-coloring $c:E(G) \to \mathbb{N}$ of $G$ we have

    \begin{align*}
		\mathbb{P}_T\left( d_{G_t}(v) = (1 \pm  \delta) \frac{pn}{T}\right)=1-o(1),
	\end{align*}
	for every $t \in [T]$ and $v \in V(G)$.
\end{lemma}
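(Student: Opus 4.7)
The plan is to reduce the claim to two applications of a Chernoff bound, exploiting the properness of $c$ to obtain independence in the second one.

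First, I would show that with high probability over $\Gnp = G(n,p)$ one has $d_G(v) = (1 \pm \delta/3)\,pn$ for every $v \in V(G)$. This is a direct Chernoff estimate combined with a union bound over the $n$ vertices; since $p \gg \log n/n$, the tail $\exp(-\Omega(pn))$ easily absorbs the factor of $n$. From this point on I would condition on such a realisation $G$ and fix an arbitrary proper edge-coloring $c \colon E(G) \to \mathbb{N}$; the remaining randomness is only the uniform assignment $\sigma$.

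Fixing $t \in [T]$ and $v \in V(G)$, let $e_1,\ldots,e_D$ be the edges at $v$, where $D = d_G(v)$. Because $c$ is proper, the colors $c(e_1),\ldots,c(e_D)$ are pairwise distinct, so the random variables $\sigma(c(e_1)),\ldots,\sigma(c(e_D))$ are independent and uniform on $[T]$ under $\mathbb{P}_T$. Consequently,
\[
d_{G_t}(v) \;=\; \sum_{i=1}^{D} \mathbf{1}\bigl\{\sigma(c(e_i)) = t\bigr\} \;\sim\; \mathrm{Bin}(D,\,1/T),
\]
which has mean $D/T = (1 \pm \delta/3)\,pn/T$. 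A second application of Chernoff then gives
\[
\mathbb{P}_T\Bigl(\bigl|d_{G_t}(v) - D/T\bigr| > (\delta/3)\,D/T\Bigr) \;\le\; 2\exp\bigl(-\Omega(pn/T)\bigr).
\]
Since $T$ is a fixed constant and $pn \gg \log n$, this probability is $n^{-\omega(1)}$, so a union bound over the $Tn$ pairs $(v,t)$ still yields $\mathbb{P}_T$-probability $1-o(1)$ that the estimate holds for \emph{all} $v$ and $t$ simultaneously. Chaining the two stages with the triangle inequality produces $d_{G_t}(v) = (1\pm\delta)\,pn/T$, as required.

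I do not foresee any real obstacle in this argument; the only delicate point is to keep the two probability spaces ($\mathbb{P}$ for $G(n,p)$ and $\mathbb{P}_T$ for $\sigma$) cleanly separated and to observe that the properness of $c$ is exactly what makes the $D$ indicator variables in the second stage independent --- without this hypothesis, repeated colors at $v$ would couple the indicators and the Chernoff bound in its textbook form would not apply.
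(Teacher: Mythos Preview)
Your argument is correct and matches exactly the approach the paper indicates: a Chernoff bound on $G(n,p)$ for the degrees, followed by a second Chernoff bound under $\mathbb{P}_T$, using that the properness of $c$ makes the indicators $\mathbf{1}\{\sigma(c(e_i))=t\}$ at a fixed vertex independent. The paper in fact omits this proof for precisely these reasons, so there is nothing to add.
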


\begin{lemma}\label{lemma:discGnp}
	Let $\eps,\beta \in (0,1)$ and $T$ be a positive integer. If $p\gg n^{-\beta}$, then the following holds for $G= G(n,p)$ with high probability. For any proper edge-coloring $c:E(G) \to \mathbb{N}$ of $G$ we have
	\[\mathbb{P}_T \big(G_t \text{ satisfies }\mathrm{DISC}(\varepsilon)\big)=1-o(1),\]
	\noindent for every $t\in [T]$.
\end{lemma}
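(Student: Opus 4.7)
The strategy is to apply Lemma~\ref{lemma:ChGr}, a Chung--Graham-type equivalence, which reduces $\mathrm{DISC}(\varepsilon)$ for $G_t$ to the statement that the number $Y_t$ of labeled copies of $C_{2k}$ in $G_t$ equals $(1\pm o(1))(p/T)^{2k}n^{2k}$ for some fixed integer $k=k(\eps,\beta)$ chosen large enough for the Chung--Graham criterion to produce $\mathrm{DISC}(\eps)$ in the sparse regime $p\gg n^{-\beta}$. As a preliminary, I would show that with high probability $G\sim G(n,p)$ satisfies (i) the total number of labeled copies of $C_{2k}$ in $G$ is $(1+o(1))n^{2k}p^{2k}$, and (ii) every edge of $G$ lies in $O(n^{2k-2}p^{2k-1})$ such copies; both bounds are standard first/second-moment computations (or Janson's inequality) and depend only on $G$, not on the coloring $c$.

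Fix such a $G$ and any proper edge-coloring $c$ of $G$. For each $C\in\mathcal{C}_{2k}(G)$ one has $\mathbb{P}_T[C\subseteq G_t]=T^{-\ell(C)}$, where $\ell(C)$ denotes the number of distinct colors on $C$. Since $c$ is proper, two edges of $C$ with a common color must be non-adjacent in $C$, and a direct count of cycles passing through a pair of independent edges of equal color shows that copies with $\ell(C)<2k$ contribute only a $o(1)$ fraction of $\E_T[Y_t]$. Hence $\E_T[Y_t]=(1\pm o(1))(p/T)^{2k}n^{2k}$.

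To concentrate $Y_t$ around its mean, I apply Azuma's inequality (Lemma~\ref{lemma:azuma}) to the independent variables $\{\sigma(i)\}_i$, one per color used by $c$. The crucial point is that each color class is a matching of size at most $n/2$, and changing $\sigma(i)$ only alters the status of cycles containing an edge of color $i$; combined with (ii), this gives Lipschitz constants $c_i=O(n^{2k-1}p^{2k-1})$. Since each cycle carries at most $2k$ distinct colors, $\sum_i c_i\le 2k\,|\mathcal{C}_{2k}(G)|=O(n^{2k}p^{2k})$, so
\[
  \sum_i c_i^2\le(\max_i c_i)\Bigl(\sum_i c_i\Bigr)=O(n^{4k-1}p^{4k-1}).
\]
Azuma then gives
\[
  \mathbb{P}_T\bigl(|Y_t-\E_T Y_t|>\eps\,\E_T Y_t\bigr)\le 2\exp\bigl(-\Omega(\eps^2\,np/T^{4k})\bigr)=o(1),
\]
since $np\gg n^{1-\beta}\to\infty$; a union bound over $t\in[T]$ finishes the proof. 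The main obstacle is calibrating $k$: it must be large enough for the sparse Chung--Graham criterion in Lemma~\ref{lemma:ChGr} to promote an approximate $C_{2k}$-count to $\mathrm{DISC}(\eps)$ at this density, but not so large that the $T^{4k}$ factor spoils the Azuma exponent $np/T^{4k}$; choosing $k$ as a suitable function of $\varepsilon$ and $\beta$ is the technical crux.
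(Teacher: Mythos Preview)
Your proposal is correct and matches the paper's proof almost step for step: reduce $\mathrm{DISC}$ to an even-circuit count via Lemma~\ref{lemma:ChGr} (together with degree concentration, Lemma~\ref{lemma:concdeg}, to handle the weights and pass from circuits to cycles), show that non-rainbow cycles are negligible so that $\E_T[Y_t]=(1\pm o(1))(pn/T)^{2k}$, and then concentrate via Azuma using exactly your Lipschitz bound $c_i=O(n^{2k-1}p^{2k-1})$ and $\sum_i c_i\le 2k|\mathcal{C}_{2k}(G)|$. Your closing worry about $T^{4k}$ is unfounded, since $T$ is a fixed constant and $np\to\infty$ regardless; the genuine constraint on $k$ is only $p\gg n^{-1+1/(2k-1)}$, which is needed for your item~(ii) and for circuits to reduce to cycles.
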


A straightforward proof that a random graph satisfies $\mathrm{DISC}(\varepsilon)$ can be tricky, since any concentration inequality we obtain has to be stronger than the choices of subsets of the vertex set. Luckily for us, the works of Chung and Graham~\cite{ChGr} relate this property with the distribution of circuits of even length. Given a graph $G$, we say that a sequence $C = (v_1,\ldots,v_{\ell})$ of vertices of $G$ is an
\emph{$\ell$-circuit} if $v_iv_{i+1} \in E(G)$, for every $i \in [\ell-1]$, and $v_1v_{\ell} \in E(G)$. The \emph{weight} of an $\ell$-circuit $C = (v_1,\ldots,v_{\ell})$ is given by

\begin{align*}
	w(C) = \prod_{i=1}^{\ell} \frac{1}{d_G(v_i)}.
\end{align*}
We denote by $\mathcal{C}_{\ell}(G)$ the collection of all $\ell$-circuits of $G$. We say that $G$
has the $\mathrm{CIRCUIT}_{\ell}(\eps)$ property if

\begin{align*}
	\sum_{C \in \mathcal{C}_{\ell}(G)} w(C) = 1 \pm \eps.
\end{align*}

The following lemma from~\cite{ChGr} shows that $\mathrm{CIRCUIT}$ essentially implies $\mathrm{DISC}$.
\begin{lemma}\label{lemma:ChGr}
	For every $\eps>0$ and positive integer $\ell$, if $G$ has the $\mathrm{CIRCUIT}_{2\ell}(\eps)$
	property, then $G$ has the $\mathrm{DISC}(\eps^{1/2\ell})$ property.
\end{lemma}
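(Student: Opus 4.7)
\emph{Proof plan.} The plan is to combine Lemma~\ref{lemma:ChGr} with Azuma's inequality applied to the random assignment $\sigma$. Fix a constant integer $\ell\ge 1$; by Lemma~\ref{lemma:ChGr}, it suffices to show that, for any fixed $t\in[T]$, with probability $1-o(1)$ over $\sigma$ the graph $G_t$ satisfies $\mathrm{CIRCUIT}_{2\ell}(\varepsilon^{2\ell})$, i.e.\
\[
  \sum_{C\in \mathcal{C}_{2\ell}(G_t)}\prod_{v\in C}\frac{1}{d_{G_t}(v)} \;=\; 1\pm \varepsilon^{2\ell}.
\]
A union bound over the constantly many values of $t$ then finishes the proof.

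By Lemma~\ref{lemma:concdeg}, I may assume every vertex of $G_t$ has degree $(1\pm o(1))pn/T$, so each weight satisfies $w(C)=(1\pm o(1))(T/pn)^{2\ell}$. It therefore suffices to show that $X_t:=|\mathcal{C}_{2\ell}(G_t)|$ is concentrated around $(1+o(1))n^{2\ell}(p/T)^{2\ell}$. For the expectation, a routine first-moment calculation in $G(n,p)$ gives $|\mathcal{C}_{2\ell}(G)|=(1+o(1))n^{2\ell}p^{2\ell}$ with high probability; and since $c$ is proper, a circuit can use two same-colored edges only if they are non-adjacent, so a short count shows that the degenerate circuits (those with fewer than $2\ell$ distinct colors) contribute a factor smaller by $T/(np)=o(1)$, uniformly in $c$. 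This yields $\mathbb{E}_T X_t=(1+o(1))n^{2\ell}p^{2\ell}/T^{2\ell}$.

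The core of the argument is the concentration of $X_t$, which I would extract from Lemma~\ref{lemma:azuma} applied to the independent coordinates $(\sigma(i))_i$. Since each color class of $c$ is a matching of size $m_i\le n/2$, changing a single $\sigma(i)$ flips at most $m_i$ edges of $G$ and therefore changes $X_t$ by at most $m_iD$, where $D:=\max_{e\in E(G)}|\{C\in\mathcal{C}_{2\ell}(G):e\in C\}|$. Routine upper-tail estimates for $G(n,p)$ yield $D=O(n^{2\ell-2}p^{2\ell-1})$ w.h.p., and combining this with $\sum_i m_i=e(G)=O(n^2p)$ and $m_i\le n/2$ gives
\[
  \sum_i c_i^2 \;\le\; D^2\cdot \tfrac{n}{2}\cdot e(G) \;=\; O\bigl(n^{4\ell-1}p^{4\ell-1}\bigr).
\]
Azuma's inequality then bounds the failure probability by $\exp\bigl(-\Omega(\varepsilon^{4\ell}\,np/T^{4\ell})\bigr)=o(1)$, where the last step uses $np\gg n^{1-\beta}\to\infty$. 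The main obstacle is precisely this Lipschitz estimate: the bound must hold \emph{uniformly} over all proper colorings $c$, which is achieved by the matching structure of color classes, while the remaining inputs (the first-moment count of $\mathcal{C}_{2\ell}(G)$, the degenerate-circuit bound, and the tail bound on $D$) reduce to standard moment calculations on circuit counts in $G(n,p)$.
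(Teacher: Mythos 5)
You have proved the wrong statement. Lemma~\ref{lemma:ChGr} is the purely deterministic Chung--Graham implication: for \emph{any fixed} graph $G$, if $\sum_{C\in\mathcal{C}_{2\ell}(G)}w(C)=1\pm\eps$ then $G$ satisfies $\mathrm{DISC}(\eps^{1/2\ell})$. It involves no random graph, no proper coloring, no random assignment $\sigma$, and no probability at all; it is an inequality relating a weighted closed-walk count to an edge-distribution discrepancy bound, proved in~\cite{ChGr} by a Cauchy--Schwarz/moment (spectral) argument. In the paper it is simply \emph{cited}, not reproved. What you wrote is instead a sketch of a proof of Lemma~\ref{lemma:discGnp}, the probabilistic statement that, for $p\gg n^{-\beta}$ and a random assignment of colors into $T$ classes, each $G_t$ satisfies $\mathrm{DISC}(\eps)$ with high probability. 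Indeed your very first line says the plan is to \emph{combine} Lemma~\ref{lemma:ChGr} with Azuma's inequality; a proof of $X$ cannot begin by invoking $X$ as an ingredient.

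If the goal really is Lemma~\ref{lemma:ChGr}, the argument must be deterministic: expand $\sum_C w(C)$ as $\operatorname{tr}\bigl((D^{-1}A)^{2\ell}\bigr)$ where $A$ is the adjacency matrix and $D$ the degree matrix, observe that $\mathrm{CIRCUIT}_{2\ell}(\eps)$ bounds the $2\ell$th moment of the spectrum of the normalized adjacency operator away from the trivial eigenvalue $1$, and then convert this spectral gap into the $\mathrm{DISC}$ bound via the expander-mixing-type inequality $\bigl|e_G(U,V)-\mathrm{vol}(U)\mathrm{vol}(V)/\mathrm{vol}(G)\bigr|\le\lambda\,\mathrm{vol}(G)$. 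None of the machinery you used --- Lemma~\ref{lemma:concdeg}, Lemma~\ref{lemma:azuma}, the matching structure of color classes, or the first-moment count of circuits in $G(n,p)$ --- is relevant to this deterministic lemma. (As a separate matter, your sketch of Lemma~\ref{lemma:discGnp} is broadly in line with the paper's proof of that result, but that is not what was asked.)
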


Since the degrees in $G_t$ are concentrated around $pn/T$ for every $t\in[T]$, we basically have to show that the number of circuit of some even length $\ell$ is close to $(pn/T)^\ell$ in $G_t$. In principle, it is not clear even how to compute the expectation of this value, since the edges are not selected independently. We simplify this problem in two steps. First we call upon the result stated below and proved in \cite[Corollary 4.9]{KKM14}, which shows that for certain values of $p$, almost all $\ell$-circuits are actually cycles. For that, we denote by $\mathcal{C}'_\ell (G)$ the number of $\ell$-cycles in a graph $G$.

\begin{lemma}\label{circuit}
	Let $\ell\geq 2$ and $\delta>0$. If $p\gg n^{-1+1/\ell}$, then with high probability $G=\gnp$ satisfies
	\[|\mathcal{C}_{2\ell}(G)|\leq (1+\delta)|\mathcal{C}'_{2\ell}(G)| .\]
\end{lemma}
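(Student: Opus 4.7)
The plan is to split $|\mathcal{C}_{2\ell}(G)| = |\mathcal{C}'_{2\ell}(G)| + R$, where $R$ counts the $2\ell$-circuits that are not cycles, and then to show separately, with high probability, that $|\mathcal{C}'_{2\ell}(G)| \ge (1-o(1)) n^{2\ell} p^{2\ell}$ and $R = o(n^{2\ell} p^{2\ell})$. Combining these gives $|\mathcal{C}_{2\ell}(G)| \le (1+\delta) |\mathcal{C}'_{2\ell}(G)|$ whp for any fixed $\delta > 0$.

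The lower bound on $|\mathcal{C}'_{2\ell}(G)|$ is standard: one computes $\mathbb{E}[|\mathcal{C}'_{2\ell}(G)|] = n(n-1)\cdots(n-2\ell+1)\, p^{2\ell} = (1+o(1)) n^{2\ell} p^{2\ell}$, which tends to infinity since the hypothesis $p \gg n^{-(\ell-1)/\ell}$ yields $n^{2\ell} p^{2\ell} \gg n^2$. Concentration follows from a routine second-moment estimate, grouping ordered pairs of $2\ell$-cycles according to the number of edges they share.

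The main step is the upper bound on $R$. I would partition non-cycle circuits according to their \emph{pattern} $\pi$, the equivalence relation on $\{1,\dots,2\ell\}$ recording which entries of the sequence $(v_1,\dots,v_{2\ell})$ represent the same vertex. Given a pattern with $k = k(\pi)$ equivalence classes and underlying connected multigraph with $m = m(\pi)$ distinct edges (and total edge-multiplicity $2\ell$), the expected number of realisations is at most $n^k p^m$. The critical case is when the support is a tree: then $m = k-1$ and, since every vertex of the multigraph has even degree, each edge is traversed an even number of times, so $2\ell \ge 2m$ and hence $k \le \ell + 1$. In the extremal configuration $k = \ell + 1$, $m = \ell$, one obtains
\[
\frac{n^k p^m}{n^{2\ell} p^{2\ell}} = \bigl( p\, n^{(\ell-1)/\ell} \bigr)^{-\ell} = o(1)
\]
by the hypothesis on $p$. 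For non-tree patterns, $m \ge k$ makes the ratio decay faster; and if all edge multiplicities equal one ($m = 2\ell$ with the circuit not a cycle), then $k < 2\ell$ and the ratio is $O(n^{k-2\ell}) = o(1)$. Summing $n^k p^m$ over the finitely many patterns and applying Markov gives $R = o(n^{2\ell} p^{2\ell})$ whp. The main obstacle is precisely this case analysis: the tree support with $k = \ell + 1$, $m = \ell$ saturates the threshold and explains why the exponent $-(\ell-1)/\ell = -1 + 1/\ell$ is the right one in the hypothesis on $p$.
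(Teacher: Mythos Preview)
Your argument is correct. The paper itself does not prove this lemma; it is quoted from \cite[Corollary~4.9]{KKM14}, so there is no in-paper proof to compare against. Your approach---second moment for the concentration of $|\mathcal{C}'_{2\ell}(G)|$ around $(np)^{2\ell}$, and first moment (Markov) for the residual $R$ after stratifying non-cycle circuits by their vertex-identification pattern---is the standard route and is essentially what the cited reference does. Your identification of the tree pattern with $k=\ell+1$ vertices and $m=\ell$ edges as the extremal case, giving ratio $(p\,n^{(\ell-1)/\ell})^{-\ell}=o(1)$, is exactly the computation that explains the threshold $p\gg n^{-1+1/\ell}$; the remaining non-tree patterns satisfy $m\ge k$ with $k<2\ell$, and one checks that the exponent $k-m-2+m/\ell$ (at the threshold value of $p$) is then strictly negative, so those terms are harmless.
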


 Now our aim is to show that almost all cycles in a proper edge-coloring of $G(n,p)$ are in fact rainbow. If we assume that to be true, it is easy to see that the expected number of $\ell$-cycles in each $G_t$ is roughly $(pn/T)^{\ell}$, since each color is independently assigned to a class. In order to prove such statement we count the number of non-rainbow cycles basically by counting the shortest path whose first and last edges have the same color and then by completing them into cycles. Therefore, the following special case of a classical result of Spencer~\cite[Theorem 2]{Spencer} is fairly convenient.

\begin{lemma}\label{Spencerzinho}
	Let $\ell\geq 2$. If $p^\ell n^{\ell-1}\gg \log n$, then with high probability $G=\gnp$ satisfies the following. For every pair of vertices $u,v\in V(G)$, the number of $\ell$-vertex paths connecting $u$ to $v$ is $\Theta(p^{\ell-1}n^{\ell-2})$.
\end{lemma}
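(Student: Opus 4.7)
Fix distinct vertices $u,v \in V(G)$ and let $X_{u,v}$ denote the number of $\ell$-vertex paths from $u$ to $v$ in $G = G(n,p)$. My plan is to show that $X_{u,v} = (1+o(1))\mathbb{E}[X_{u,v}]$ with failure probability $o(n^{-2})$ for each fixed pair $(u,v)$, and then take a union bound over the $\binom{n}{2}$ pairs. The first moment is immediate: counting ordered tuples of $\ell-2$ distinct internal vertices in $V(G)\setminus\{u,v\}$ and using that each of the $\ell-1$ edges is present independently with probability $p$ gives
\[
\mathbb{E}[X_{u,v}] \;=\; (n-2)(n-3)\cdots(n-\ell+1)\,p^{\ell-1} \;=\; (1+o(1))\,p^{\ell-1}n^{\ell-2}.
\]

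For the concentration of $X_{u,v}$ around its mean, I would appeal to a polynomial concentration inequality, either Spencer's counting-extensions theorem (as cited) with root set $R=\{u,v\}$ and extension $H$ the $\ell$-vertex path, or equivalently Kim--Vu polynomial concentration applied to $X_{u,v}$ viewed as a multilinear polynomial of degree $d=\ell-1$ in the edge-indicator variables of $G(n,p)$. For an edge set $A$ with $|A|=j$ lying inside some $\ell$-vertex $u$-$v$ path, the partial expectation $\mathbb{E}[\partial_A X_{u,v}]$ is bounded by $n^{\ell-2-v'_A}\,p^{\ell-1-j}$, where $v'_A\geq j$ is the number of internal path-vertices incident to edges in $A$; the maximum over $|A|=j$ is attained when $A$ is a sub-path of $j$ consecutive edges anchored at $u$ or $v$ (where $v'_A=j$), yielding $\mathbb{E}_j[X_{u,v}]=\Theta(\mathbb{E}[X_{u,v}]/(np)^j)$. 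In particular $E':=\max_{j\geq 1}\mathbb{E}_j[X_{u,v}]=\Theta(\mathbb{E}[X_{u,v}]/(np))$, and the hypothesis $p^\ell n^{\ell-1}\gg\log n$, which equals $\mathbb{E}[X_{u,v}]\cdot(np)$, forces $np\gg(n\log n)^{1/\ell}$, dominating any polylog in $n$. Polynomial concentration then gives
\[
\mathbb{P}\bigl(|X_{u,v}-\mathbb{E}[X_{u,v}]|>\tfrac{1}{2}\mathbb{E}[X_{u,v}]\bigr) \;\leq\; \exp\bigl(-\Omega(n^{c_\ell})\bigr) \;=\; o(n^{-2})
\]
for some $c_\ell>0$, which easily survives the union bound over the $\binom{n}{2}$ pairs.

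The main technical obstacle is the partial-expectation bookkeeping in Spencer/Kim--Vu: enumerating all admissible edge sets $A\subseteq E(K_n)$ that can lie inside some $\ell$-vertex $u$-$v$ path, and verifying that the anchored sub-path configuration is indeed the maximizer for every $j$. The algebraic point is that $\mathbb{E}[X_{u,v}]\cdot(np)=p^\ell n^{\ell-1}$, so the hypothesis matches precisely the quantity controlling the concentration exponent; no slack is wasted, and the union bound over $\Theta(n^2)$ pairs then goes through with enormous room to spare.
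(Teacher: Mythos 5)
Your plan follows the same general strategy that the paper implicitly relies on: the paper does not prove this lemma but cites it as a special case of Spencer's Theorem~2 in \emph{Counting Extensions}, and your outline (first moment + Spencer/Kim--Vu polynomial concentration + union bound over pairs) is indeed the standard route. However, there is a concrete error in your partial-expectation bookkeeping, and it points to a real issue.

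You claim $\mathbb{E}_j[X_{u,v}]=\Theta(\mathbb{E}[X_{u,v}]/(np)^j)$ for all $j\geq 1$, with the maximizer an anchored $j$-edge sub-path, and hence $E'=\max_{j\geq1}\mathbb{E}_j[X_{u,v}]=\Theta(\mathbb{E}[X_{u,v}]/(np))$. This formula holds for $1\le j\le \ell-2$, but it fails at top degree $j=\ell-1$: if $A$ is an entire $\ell$-vertex $u$--$v$ path, then $\partial_A X_{u,v}\equiv 1$, so $\mathbb{E}_{\ell-1}[X_{u,v}]=1$, whereas $\mathbb{E}[X_{u,v}]/(np)^{\ell-1}=1/n$. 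Thus in fact $E'=\max\{1,\mathbb{E}[X_{u,v}]/(np)\}$, and Kim--Vu's deviation scale is at least $a_d\lambda^{\ell-1}$. Running the union bound over $\binom{n}{2}$ pairs forces $\lambda\gtrsim\log n$, so to land inside a constant-factor window around the mean one needs, in particular, $\mathbb{E}[X_{u,v}]\gg(\log n)^{2(\ell-1)}$ (or at the very least $\mathbb{E}[X_{u,v}]\gg\log n$ if one appeals directly to Spencer's sharper result).

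The stated hypothesis $p^\ell n^{\ell-1}\gg\log n$ does \emph{not} give this, since $p^\ell n^{\ell-1}=(np)\cdot\mathbb{E}[X_{u,v}]$ and $np\to\infty$. For instance, with $\ell=3$ and $p=n^{-3/5}$ the hypothesis reads $n^{1/5}\gg\log n$ (true), yet $\mathbb{E}[X_{u,v}]=np^2=n^{-1/5}\to 0$, and the conclusion (``all codegrees are $\Theta(np^2)$'') is plainly false. The condition Spencer's Theorem~2 actually requires here is that the expected extension count $\mathbb{E}[X_{u,v}]=p^{\ell-1}n^{\ell-2}$ itself be $\gg\log n$, not $\mathbb{E}[X_{u,v}]\cdot np$. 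So the lemma statement appears to carry a typo in the hypothesis; your proof inherits it and additionally undercounts $E'$ by a factor of $n$ at top degree. Under the corrected hypothesis $p^{\ell-1}n^{\ell-2}\gg\log n$, your outline would work once $E'$ is replaced by $\max\{1,\mathbb{E}[X_{u,v}]/(np)\}$ and the verification that $\mathbb{E}[X_{u,v}]$ is itself super-polylogarithmic is made explicit.
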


We remark that the values of $p$ needed to apply Lemma \ref{Spencerzinho} are lower for longer paths. This fact plays an important role in the proof of Lemma \ref{lemma:rainbowcycles}, which we are now ready to state.

\begin{lemma}\label{lemma:rainbowcycles}
	
	For $\beta \in(0,1)$, let $p\geq n^{-\beta}$ and let $\ell$ be an integer satisfying $p^{\lceil \ell/2 \rceil}n^{\lceil \ell/2 \rceil-1}\gg \log n$. With high probability, in every proper edge-coloring of $G=\gnp$ there are $O(p^{\ell-1}n^{\ell-1})$ non-rainbow $\ell$-cycles.
\end{lemma}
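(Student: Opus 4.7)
The plan is to argue that any non-rainbow $\ell$-cycle contains two same-colored edges which, by propriety of the coloring, are vertex-disjoint and split the cycle into two arcs. Letting $j \in \{1, \ldots, \lfloor(\ell-2)/2\rfloor\}$ denote the length of the shorter arc, each non-rainbow $\ell$-cycle encodes at least one tuple consisting of a \emph{bad path} $v_0 v_1 \cdots v_{j+2}$ of length $j+2$ --- meaning $c(v_0v_1) = c(v_{j+1}v_{j+2})$ --- together with a \emph{completion path} of length $b := \ell - j - 2$ connecting $v_0$ and $v_{j+2}$. Upper-bounding the number of such tuples by $O(p^{\ell-1}n^{\ell-1})$ will therefore suffice. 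The choice of $j$ as the \emph{shorter} arc length guarantees $b + 1 \ge \lceil \ell/2\rceil$ vertices in the completion path, which is exactly what the hypothesis allows us to feed into Lemma~\ref{Spencerzinho}.

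Fixing $j$, I would first count bad paths of length $j+2$ \emph{without} invoking Lemma~\ref{Spencerzinho}: pick the initial edge $e = v_0 v_1$ in $O(pn^2)$ ways (using $|E(G)| = O(pn^2)$ with high probability), extend by an arbitrary walk $v_1 v_2 \cdots v_{j+1}$ of length $j$ in $G$ in at most $\Delta(G)^j = O((pn)^j)$ ways (using the Chernoff-type bound $\Delta(G) = O(pn)$, valid since $pn \ge n^{1-\beta} \gg \log n$), and then pick $v_{j+2}$. Propriety of $c$ is crucial here: since each color class is a matching, $v_{j+1}$ has at most one neighbor in the color class of $e$, so there is at most one choice of $v_{j+2}$. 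This yields $O(p^{j+1}n^{j+2})$ bad paths of length $j+2$.

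For each bad path I would then append a completion path of length $b = \ell - j - 2$ between $v_0$ and $v_{j+2}$ via Lemma~\ref{Spencerzinho}. Because $pn \to \infty$, the quantity $p^k n^{k-1}$ is nondecreasing in $k$, so the hypothesis $p^{\lceil \ell/2\rceil}n^{\lceil \ell/2\rceil-1}\gg \log n$ implies $p^{b+1}n^b \gg \log n$ for every $b + 1 \ge \lceil \ell/2 \rceil$, and Lemma~\ref{Spencerzinho} then gives $O(p^b n^{b-1})$ completions per pair of endpoints. Multiplying and summing over the $O(1)$ values of $j$ yields
\[
\sum_{j=1}^{\lfloor(\ell-2)/2\rfloor} O\bigl(p^{j+1}n^{j+2}\bigr)\cdot O\bigl(p^{b} n^{b-1}\bigr) = O\bigl(p^{\ell-1}n^{\ell-1}\bigr),
\]
as desired.

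The main obstacle I anticipate is the asymmetry forced by the hypothesis: Lemma~\ref{Spencerzinho} cannot bound paths of fewer than $\lceil \ell/2\rceil$ vertices here, so the shorter arc must be handled separately. A naive path-count on bad paths would overshoot the target by a factor of $pn$ (the total number of $(j+2)$-paths is $\Theta(p^{j+2}n^{j+3})$). The rescue is precisely to use propriety of the coloring to pin down the second same-colored edge \emph{uniquely} once $v_{j+1}$ is fixed; this saves exactly the factor of $pn$ needed to land at $O(p^{\ell-1}n^{\ell-1})$, while the longer arc is comfortably in the regime where Lemma~\ref{Spencerzinho} applies.
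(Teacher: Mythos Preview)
Your proposal is correct and follows essentially the same route as the paper: the paper's ``color-tied path'' is exactly your ``bad path'', and both arguments count these by fixing the first edge, greedily extending through $\Delta(G)=O(pn)$ neighbours, and using propriety to pin the final same-coloured edge uniquely, then close up via Lemma~\ref{Spencerzinho} on the longer arc. The only cosmetic difference is that the paper indexes by the number $k$ of vertices in the color-tied path (your $k=j+3$) rather than by the shorter-arc length $j$.
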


\begin{proof}
  Let $G=G(n,p)$ be as in the statement. Notice that as an straightforward application of Chernoff's inequality (which we will omit the details here), it follows that with high probability we have $d(v)\leq 2pn$, for every $v\in V(G)$. Fix now a proper edge-coloring of $G$.

 We say that a path in $G$ is \textit{color-tied} if the first and last edges have the same color. Note that every non-rainbow $\ell$-cycle must contain a color-tied path with at most $\lfloor \ell/2 \rfloor +2$ vertices, by considering the shortest path between edges with the same color. We then count the number of non-rainbow $\ell$-cycles by counting the number of such paths and then by counting in how many ways these paths can be extended into an $\ell$-cycle in $G$.

	To count the number of color-tied paths with $k$ vertices, for a fixed $k\in [\lfloor \ell/2 \rfloor +2]$, we first choose an ordered pair $(v_1,v_2)$ such that $\{v_1,v_2\}\in E(G)$. We then count the number of paths with $k-1$ vertices such that $(v_1,v_2)$ are respectively the first and second vertices by inductively extending the path choosing vertices from the neighborhood of the last vertex of the current path. Finally, there is at most one way of extending the current path with $k-1$ vertices to a color-tied path with $k$ vertices. This is because the last edge must have the same color as $\{v_1,v_2\}$ and since the coloring is proper, there must be at most one neighbor of the last vertex of the current path with that color. Therefore, the number of color-tied $k$-paths is at most
	\[
      4pn^2\cdot (2pn)^{k-3}\cdot 1=O(p^{k-2}n^{k-1}),
    \]
which is smaller than the number of $k$-paths by a factor of $\Omega(pn)$.

Now let $v_1v_2\dots v_k$ be a color-tied path, with $k\in [\lfloor \ell/2 \rfloor +2]$. Since
    \[
      p^{\ell-k+2}n^{\ell-k+1}
      \gg p^{\lceil \ell/2 \rceil}n^{\lceil \ell/2 \rceil-1}
      \gg \log n,
    \]
	\noindent then with high probability the number of $(\ell-k+2)$-paths connecting $v_1$ to $v_k$ is of order $p^{\ell-k+1}n^{\ell-k}$, by Lemma~\ref{Spencerzinho}. Therefore, the number of non-rainbow cycles is at most
	\[
      \sum_{k=1}^{\lfloor \ell/2 \rfloor +2}
      O\left(p^{k-2}n^{k-1}\cdot p^{\ell-k+1}n^{\ell-k}\right)
      = O(p^{\ell-1}n^{\ell-1}).
    \]
	
\end{proof}

Now we put all pieces together to prove Lemma~\ref{lemma:discGnp}.

\begin{proof}[Proof of Lemma~\ref{lemma:discGnp}]
  Let $\varepsilon,\beta\in (0,1)$ and let $G = G(n,p)$ with $p=n^{-\beta}$ and $c:E(G)\rightarrow \mathbb{N}$ a proper edge-coloring. We choose an integer $\ell$ such that $\beta<1-1/(2\ell-1)$ and we set $\varepsilon'=\varepsilon^{2\ell}$. Our goal is to show that with high probability we have for $G=G(n,p)$ that
  \[
    \P_T\left(G_t\text{ satisfies }\mathrm{DISC}(\varepsilon) \right)=1-o(1),
  \]

  \noindent for every $t\in [T]$, in a random assignment of colors of $c$ into $T$ classes. For any $\delta >0$ with high probability we have that $\mathbb{P}_T(d_{G_t}(v) = (1 \pm 3\delta)pn/T)=1-o(1)$ for all $v \in  V(G)$ and all $t \in [T]$, by Lemma~\ref{lemma:concdeg}. Therefore, by choosing $\delta$ small enough as a function of $\eps'$, we have

  \begin{align}\label{pikachu}
    \sum_{C \in \mathcal{C}_{2\ell}(G_t)} w(C)
    = \sum_{C \in \mathcal{C}_{2\ell}(G_t)} \prod_{v \in V(C)} \frac{1}{d_{G_t}(v)} \;
    = \;|\mathcal{C}_{2\ell}(G_t)|\left( 1 \pm \frac{\eps'}{3} \right) {\left(\frac{T}{pn}\right)}^{2\ell},
  \end{align}
where $\mathcal{C}_{2\ell} (G_t)$ denotes the set of $2\ell$-circuits in $G_t$.

  If we show that, under the distribution $\mathbb{P}_T$, $G_t$ has the $\mathrm{CIRCUIT}_{2\ell}(\varepsilon')$ property with probability $1-o(1)$, then Lemma~\ref{lemma:ChGr} implies that the same holds for $\mathrm{DISC}(\varepsilon)$. By the definition of the $\mathrm{CIRCUIT}_{2\ell}(\varepsilon')$ property, we have to show that $\sum w(C)=1\pm \varepsilon'$. By \eqref{pikachu}, it is sufficient to prove that

  \begin{align}\label{cycleconc}
    \mathbb{P}_T\left(|\mathcal{C}_{2\ell}(G_t)|
    = \left( 1 \pm \frac{\eps'}{3} \right) {\left(\frac{pn}{T}\right)}^{2\ell}\right)
    = 1 - o(1).
  \end{align}

  Recall that $\mathcal{C}'_{2\ell}(G_t)\subset \mathcal{C}_{2\ell}(G_t)$ is the collection of $2\ell$-cycles in $G_t$. Since $p\gg n^{-1+1/2\ell}$, the numbers of $2\ell$-circuits and of $2\ell$-cycles in $G(n,p)$ are with high probability asymptotically equal. More precisely, by choosing some small $\delta$ in Lemma \ref{circuit} we infer that with high probability $G$  satisfies

  \begin{align}\label{blastoise}
  |\mathcal{C}_{2\ell}(G)\setminus \mathcal{C}'_{2\ell}(G)|
  \leq \dfrac{\eps'}{6}\left(\dfrac{pn}{T}\right)^{2\ell}.
  \end{align}

  Therefore, we can remove the $2\ell$-circuits that are not $2\ell$-cycles from the computation. In doing that, we reduce the proof of  \eqref{cycleconc} to proving that

\begin{align}\label{charizard}
\mathbb{P}_T\left(|\mathcal{C}'_{2\ell}(G_t)|= \left(1\pm \dfrac{\varepsilon'}{6}\right)\left(\dfrac{pn}{T}\right)^{2\ell}\right)=1-o(1),
\end{align}
for every $t\in [T]$.

  In order to prove \ref{charizard}, fix $t\in [T]$ and for each $i \in c(E(G))$, let $A_i = \{0,1\}$ and let $X_i$ be the
  indicator function for the event $\{\sigma(i) = t\}$ and set $Y = |\mathcal{C}'_{2\ell}(G_t)|$. Note
  that $Y=f(X_1,\ldots,X_{r})$, for some $f:\prod_{i=1}^{r}A_i \rightarrow \mathbb{R}$. Now,
  let $c_i$ be the smallest real number for which $|f(x)-f(x')| \leq c_i$, whenever \(x,x' \in \prod_{i=1}^{r} A_i\) differ
  only on the $i$th coordinate. By double counting the pairs $(i,e)$ such that $i\in c(E(G))$ and $e\in E(G)$ has color $i$ and is contained in a $2\ell$-cycle, we obtain

  \begin{align}\label{eq:sumci}
    \sum_{i=1}^{r}c_i \leq 2\ell |\mathcal{C}'_{\ell}(G)|.
  \end{align}

  Moreover, since $pn^{1-2/(2\ell-1)} \to \infty$, Lemma~\ref{Spencerzinho} implies that the number of of $2\ell$-cycles in $G$ containing a given edge $e \in G$ is at most $Dp^{2\ell-1}n^{2\ell - 2}$, for some large constant $D>0$. Since each color $i \in [r]$ induces a matching in $G$, it follows that $c_i= Dp^{2\ell-1}n^{2\ell - 1}$. Together with \eqref{eq:sumci}, we obtain that

  \begin{align*}
    \sum_{i = 1}^{r} c_i^2
    \leq Dp^{2\ell-1}n^{2\ell - 1}\sum_{i = 1}^{r} c_i
    \leq 2\ell D p^{2\ell-1}n^{2\ell - 1}|\mathcal{C}'_{2\ell}(G)|.
  \end{align*}

  To finish the proof of \ref{charizard}, we have to calculate the expectation of $Y$. Since each edge is in $G_t$ with probability $1/T$, then for each $C\in \mathcal{C}'_{2\ell}(G)$, we have that $\mathbb{P}_T (C\in G_t)\geq 1/T^{2\ell}$, with equality being attained if $C$ is rainbow. Since $(pn)^{2\ell}\gg 1$, the number of $2\ell$-cycles in $G(n,p)$ is with high probability $(1-o(1))(pn)^{2\ell}$. By Lemma~\ref{lemma:rainbowcycles}, since $p^{\ell-3}n^{\ell-4}\gg \log n$, almost all of those $2\ell$-cycles are rainbow. Therefore, with high probability (with respect to $G(n,p)$) we have that
  \[\mathbb{E}_T(Y)=\left(1\pm \dfrac{\varepsilon'}{12}\right)\left(\dfrac{pn}{T}\right)^{2\ell}.\]

  Finally, by Lemma~\ref{lemma:azuma},

  \begin{align*}
    \mathbb{P}_T\left(|Y - \E[Y]| >  \dfrac{\varepsilon'}{12}\left(\dfrac{pn}{T}\right)^{2\ell} \right)
    & \leq 2\exp\left\{- \frac{\Omega((pn)^{4\ell})}{\sum_i^{r} {c_i}^2}\right\} \\
    &=2\exp{-\Omega(pn)} \\
    & = o(1).
  \end{align*}
  Therefore, with probability tending to $1$ (under $\mathbb{P}_T$) equation \eqref{charizard} holds, which, together with with \eqref{blastoise}, implies \eqref{cycleconc} and finishes the proof.

\end{proof}

\subsection{Isolated copies and their distribution}\label{subsec:isolated}\hspace*{\fill}

Given a graph $G$ on $n$ vertices, we call a copy of a graph $S$ in $G$ \emph{isolated} if it does not share an edge with any other copy of $S$ in $G$. Let $G^S$ be the spanning subgraph of
$G$ induced by all the edges that belong to isolated copies of $S$ in $G$. Since $G^S$ will play a role in the application of Theorem \ref{thm:KLR}, the following issue arises. Theorem \ref{thm:KLR} is a counting result that states that there exists only a very small number of graphs that possess some pseudo-random properties and do not copies of a graph $H$. However, since $G^S$ is not $G(n,p)$ then it is not clear how to bound the probability of appearance of such graphs, which motivates the lemma below. For $E\subseteq E(K_n)$,
we write $E \sqsubseteq G^{S}$ if $E\subseteq G^S$ and if additionally no two edges in $E$ belong to
the same isolated copy of $S$.

\begin{lemma}\label{lemma:negcorrelation}
	Let $F$ be a graph and $G= \gnp$, with $p = p(n) \in \interval[open left]{0}{1}$. If
	$q=n^{v(F)-2}p^{e(F)}$, then for any $E \subseteq E(K_n)$ we have that

    \begin{align*}
		\P[E \sqsubseteq E(G^{S})] \leq q^{|E|}.
	\end{align*}
\end{lemma}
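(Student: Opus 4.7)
The plan is to prove the bound by a union bound over tuples of edge-disjoint copies of $S$ in $K_n$, one per edge of $E$, combined with the independence of disjoint edge events in $G(n,p)$.

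The key structural observation is that any two distinct isolated copies of $S$ in $G$ must be edge-disjoint: if two distinct isolated copies shared an edge, then each of them would be a copy of $S$ different from the other with which it shares an edge, contradicting isolation. Hence every edge $e \in E(G^S)$ lies in a \emph{unique} isolated copy $C_e$ of $S$ in $G$, and the event $E \sqsubseteq E(G^S)$ is precisely the event that there exist pairwise distinct (and therefore pairwise edge-disjoint) isolated copies $(C_e)_{e \in E}$ of $S$ in $G$ with $e \in E(C_e)$ for every $e \in E$.

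Dropping the isolation requirement (which can only shrink the event) and applying the union bound yields
\[
\P\bigl[E \sqsubseteq E(G^S)\bigr]
\ \leq\ \sum_{(C_e)_{e\in E}} \P\!\left[\,\bigcap_{e\in E}\{C_e \subseteq G\}\right],
\]
where the sum ranges over tuples of pairwise edge-disjoint copies of $S$ in $K_n$ satisfying $e \in E(C_e)$. Since the $C_e$ have pairwise disjoint edge sets, the events $\{C_e \subseteq G\}$ are mutually independent, each of probability $p^{e(S)}$, so the joint probability equals exactly $p^{e(S)|E|}$.

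It remains to count the admissible tuples. After a canonical rooting of $S$ that fixes a distinguished edge $xy \in E(S)$ together with its orientation, a copy of $S$ in $K_n$ containing $e = \{u,v\}$ with $xy$ mapped to $(u,v)$ is determined by the images of the $v(S)-2$ vertices of $V(S)\setminus\{x,y\}$, giving at most $n^{v(S)-2}$ possibilities per edge. Multiplying over $e \in E$ and combining with the probability bound above gives
\[
\P\bigl[E \sqsubseteq E(G^S)\bigr]\ \leq\ n^{(v(S)-2)|E|}\, p^{e(S)|E|} \ =\ q^{|E|}.
\]
The main subtlety I expect is the bookkeeping of this final counting step: one must set up the enumeration so that the factor $n^{v(S)-2}$ per edge comes out cleanly, without picking up spurious constants from $|\mathrm{Aut}(S)|$ or from the number of edges of $S$ that could be identified with~$e$. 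Rooting the copies at a canonical edge, as above, is the cleanest way to absorb these combinatorial factors and obtain the bound in the precise form $q^{|E|}$.
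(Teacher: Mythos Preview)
The paper states this lemma without proof, so there is no argument of its own to compare against. Your overall strategy --- observe that distinct isolated copies are edge-disjoint, drop the isolation requirement, and union-bound over tuples of pairwise edge-disjoint copies $(C_e)_{e\in E}$ with $e\in E(C_e)$ --- is correct and is the natural approach; the independence step yielding $p^{e(S)|E|}$ per tuple is also fine.

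The gap is in the enumeration. Your rooting argument counts only those copies of $S$ through $e$ in which the \emph{fixed} distinguished edge $xy$ of $S$ is sent to $e$. But the isolated copy $C_e$ is determined by $G$, and nothing forces $e$ to correspond to $xy$ under an isomorphism $S\to C_e$; that would require $S$ to be edge-transitive. In general the number of unlabelled copies of $S$ in $K_n$ containing a fixed edge is only bounded by a quantity of the form $2e(S)\,n^{v(S)-2}$, not by $n^{v(S)-2}$. Concretely, for $S=P_3$ there are $2(n-2)$ copies through any fixed edge, which exceeds $n^{v(S)-2}=n$ for $n\ge 5$; and one checks directly that $\P\bigl[e\in E(G^{P_3})\bigr]\sim 2np^2>q$ when $p=o(1/n)$, so the constant-free bound $q^{|E|}$ genuinely fails in that regime. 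Your argument therefore actually delivers $(C_S\,q)^{|E|}$ for some constant $C_S=C_S(S)$, and this is all that is needed downstream: in the proof of Theorem~\ref{thm:main} the parameter $q$ already carries an extra factor $6e(S)$, which absorbs $C_S$. The lemma as literally stated in the paper appears to share this minor imprecision.
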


We move to other properties of $G^S$, namely the density and upper-uniformity, to be able to apply Lemma~\ref{tools:lemma2} and hence the Theorem \ref{thm:KLR}. For disjoint sets $V_1,\ldots,V_{v(S)} \subseteq V(G)$, we
denote by $Z_G(V_1,\ldots,V_{v(S)})$ the number of \emph{transversal} copies of $S$ in
$G[V_1,\ldots,V_{v(S)}]$, i.e., copies of $S$ in $G$ with one vertex in each $V_i$ with $i \in
[v(S)]$ and by $Y_G(V_1,\ldots,V_{v(S)})$ the number of such copies that are also isolated. We may omit the sets
$V_1,\ldots,V_{v(S)}$ from the notation if they are clear from the context. For $G = \gnp$ and for disjoint linear-sized sets $V_1,\ldots,V_{v(S)} \subseteq V(G)$, we have that

\begin{align*}
	\E[Z(V_1,\ldots,V_{e(S)})] = \Theta\left( n^{v(S)}p^{e(S)} \right).
\end{align*}

Our analysis of $G_t^S$ starts by proving that in a typical outcome of $G=G(n,p)$ and in a typical assignment of colors we have $Y_{G_t}(V_1,\dots,V_{v(S)})=\Omega(n^{v(S)}p^{e(S)})$. In words, we mean that the number of isolated copies of $S$ in each $G_t$, with $t\in [T]$, is a considerable proportion of all copies of $S$ in $G$. Therefore, the bipartite graph $G^S_t[V_i,V_j]$, for $ij \in E(S)$, has $\Omega(n^{v(S)}p^{e(S)})$ many edges because the copies are isolated. Under the hypothesis of Theorem~\ref{thm:main}, this lower bound is larger than $\Omega(n^{2-1/m_2(H)})$, which is one of the requirements for applying Theorem~\ref{thm:KLR} successfully.

\begin{lemma}\label{prop:canspan}
	Let $S$ be a graph. For every $\mu>0$ and integer $T>0$ there exists $\alpha>0$ such that for $p=n^{-\beta}$ and $1/m_2(S)<\beta \leq (v(S)-1)/e(S))$ the following holds.  With high probability for every proper edge-coloring of $G=G(n,p)$ and for a fixed family of disjoint sets
	$V_1, \ldots, V_{v(S)} \subseteq V(G)$ with $|V_i| \geq \mu n$, for $i \in [v(S)]$, we have

    \begin{align*}
		\mathbb{P}_T\left(Y_{G_t} \geq \alpha n^{v(S)}p^{e(S)}\right)=1-o(1),
	\end{align*}
	for every $t \in [T]$.
\end{lemma}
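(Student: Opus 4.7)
My plan is two-fold. First, I show that with high probability $G = G(n,p)$ contains $\Omega(n^{v(S)} p^{e(S)})$ isolated transversal copies of $S$ across $V_1, \ldots, V_{v(S)}$. Second, for any fixed such $G$ and a proper edge-coloring $c$, I apply Azuma's inequality (Lemma~\ref{lemma:azuma}) to the random color assignment $\sigma$ to show that a fraction of these copies of order $T^{-e(S)}$ lands in $G_t$ with probability $1-o(1)$ under $\mathbb{P}_T$.

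For the first step, a direct first-moment computation gives $\mathbb{E}[Z_G(V_1,\ldots,V_{v(S)})] = \Theta(n^{v(S)} p^{e(S)})$. Let $\Delta := \sum \mathbb{P}[C_1, C_2 \subseteq G]$, summed over ordered pairs of distinct transversal copies $C_1 \ne C_2$ sharing at least one edge. Decomposing by the isomorphism type $S'$ of the intersection $C_1 \cap C_2$ yields $\Delta = O\bigl(\sum_{S'} n^{2v(S) - v(S')} p^{2e(S) - e(S')}\bigr)$, where $S'$ ranges over proper subgraphs of $S$ with $v(S') \geq 2$ and $e(S') \geq 1$. Since $S$ is $2$-balanced, a mediant-of-fractions identity yields $(e(S)-e(S'))/(v(S)-v(S')) \geq m_2(S)$ for every such $S'$; together with $\beta > 1/m_2(S)$ this forces $n^{v(S)-v(S')} p^{e(S)-e(S')} = o(1)$, so $\Delta = o(\mathbb{E}[Z_G])$. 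A standard second-moment argument (or Janson's inequality in its lower-tail form) then yields $Z_G \geq \mathbb{E}[Z_G]/2$ with high probability, while Markov applied to the non-isolated transversal-copy count (whose expectation is at most $\Delta$) shows this count to be $o(\mathbb{E}[Z_G])$ with high probability. Subtracting produces $Y_G \geq \Omega(n^{v(S)} p^{e(S)})$ with high probability.

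For the second step, fix such a $G$ and a proper edge-coloring $c$. For every isolated transversal copy $C$ of $S$, the probability under $\mathbb{P}_T$ that all edges of $C$ land in $G_t$ equals $T^{-r(C)} \geq T^{-e(S)}$, where $r(C)$ is the number of distinct colors on the edges of $C$; hence $\mathbb{E}_T[Y_{G_t}] \geq Y_G / T^{e(S)} = \Omega(n^{v(S)} p^{e(S)} / T^{e(S)})$. Viewing $Y_{G_t}$ as a function of the independent variables $\{\sigma(i)\}_{i}$, the Lipschitz constant $c_i$ with respect to $\sigma(i)$ is at most the number of isolated transversal copies using an edge of color $i$. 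Because the copies are isolated and each color class is a matching, this is at most the number of edges of color $i$ in $G^S$, and in particular at most $n/2$. Double counting gives $\sum_i c_i \leq e(S) Y_G$, hence $\sum_i c_i^2 \leq (n/2) \sum_i c_i = O(n Y_G)$. Applying Lemma~\ref{lemma:azuma} with $a = \mathbb{E}_T[Y_{G_t}]/2$ yields $\mathbb{P}_T\bigl(Y_{G_t} < \alpha n^{v(S)} p^{e(S)}\bigr) \leq 2\exp\bigl(-\Omega(Y_G / (T^{2e(S)} n))\bigr) = o(1)$, provided $\alpha$ is a sufficiently small multiple of $T^{-e(S)}$.

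The main obstacle is ensuring that the Azuma exponent $Y_G/(T^{2e(S)} n)$ diverges. Since $Y_G = \Omega(n^{v(S)-\beta e(S)})$, this requires $\beta < (v(S)-1)/e(S)$ strictly, which is precisely the regime of the subsequent application: the exponent $\beta(H,S) = (v(S)-2+1/m_2(H))/e(S)$ used in Theorem~\ref{thm:main} satisfies $\beta(H,S) < (v(S)-1)/e(S)$ whenever $m_2(H) > 1$. The other combinatorial point to handle with care is the verification of $(e(S)-e(S'))/(v(S)-v(S')) \geq m_2(S)$ for every relevant subgraph $S'$, including the edge cases $v(S')=2$ (where equality holds) and $v(S')=v(S)-1$.
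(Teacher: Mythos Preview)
Your proposal is correct and follows essentially the same approach as the paper: first establish $Y_G = \Omega(n^{v(S)}p^{e(S)})$ with high probability via the $2$-balancedness of $S$, Janson's inequality for the lower tail of $Z_G$, and Markov on the non-isolated count (this is the content of Proposition~\ref{prop:gnpIsol}); then condition on such a $G$ and apply Azuma to the random assignment $\sigma$ with the same Lipschitz bounds $c_i \leq n$, $\sum_i c_i \leq e(S)Y_G$, and the same expectation bound $\mathbb{E}_T[Y_{G_t}] \geq Y_G/T^{e(S)}$. Your observation that the Azuma step actually needs the strict inequality $\beta < (v(S)-1)/e(S)$ is accurate and matches the paper's own use of it (the boundary case $\beta = (v(S)-1)/e(S)$ is never needed in the application, since $\beta(H,S) < (v(S)-1)/e(S)$ whenever $m_2(H)>1$).
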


Another requirement for our proof is that $G^S$, and hence $G^S_t$, is $(\mu,q)$-upper uniform for $q= O(n^{v(S)}p^{e(S)})$, which is stated in the next lemma. The proof follows by an application of Lemma~\ref{lemma:negcorrelation} and it was originally stated in \cite[Lemma 14]{KK}.

\begin{lemma}\label{lemma:upperuniff}
	Let $S$ be a graph and $\beta >0$ be such that $\beta < (v(S)-1)/e(S)$. Let $p = Bn^{ -
		\beta}$, with $B>0$, and $G = \gnp$. Then for every $\mu>0$ with high probability we have that $G^S$ is
	$(\mu,q)$-upper-uniform, where $q = 6e(S)n^{v(S)-2}p^{e(S)}$.
\end{lemma}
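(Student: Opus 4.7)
My plan is a straightforward first-moment / union bound argument built on Lemma~\ref{lemma:negcorrelation}. Set $q_0 := n^{v(S)-2} p^{e(S)}$, so that $q = 6\,e(S)\,q_0$; the conclusion is vacuous when $q \ge 1$, so I may assume $q < 1$.

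The key structural observation is that two distinct isolated copies of $S$ are, by definition, edge-disjoint, so every edge of $G^S$ belongs to a \emph{unique} isolated copy of $S$. A greedy selection then yields the following extraction: for any $E \subseteq E(G^S)$ there is a subset $E' \subseteq E$ with $|E'| \ge |E|/e(S)$ such that no two edges of $E'$ lie in a common isolated copy; in the notation of Lemma~\ref{lemma:negcorrelation}, $E' \sqsubseteq E(G^S)$.

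Suppose now that $G^S$ fails $(\mu,q)$-upper uniformity. Then there exist disjoint $U,V \subseteq V(G)$ with $|U|,|V| \ge \mu n$ and $e_{G^S}(U,V) > (1+\mu)\, q\, |U||V|$. Applying the extraction above to $E_{G^S}(U,V)$ produces $E' \sqsubseteq E(G^S)$ with $E' \subseteq E_{K_n}(U,V)$ and $|E'| \ge m$, where $m := \lceil (1+\mu)\,6\,q_0\,|U||V|\rceil$. By Lemma~\ref{lemma:negcorrelation} and the first-moment method,
\[
\P\bigl[\exists\,E'' \subseteq E_{K_n}(U,V):\ |E''| = m,\ E'' \sqsubseteq E(G^S)\bigr]
\ \le\ \binom{|U||V|}{m}\, q_0^{\,m}
\ \le\ \left(\frac{e\,|U||V|\,q_0}{m}\right)^{\!m}
\ \le\ \left(\frac{e}{6}\right)^{\!m}.
\]
Taking a union bound over the at most $3^n$ candidate pairs $(U,V)$ (each vertex lies in $U$, in $V$, or in neither), the probability that $G^S$ fails to be $(\mu,q)$-upper uniform is at most $3^n (e/6)^m$.

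To close the argument I would check that the hypothesis $\beta < (v(S)-1)/e(S)$ forces $q_0 = B^{e(S)}\,n^{\,v(S)-2-\beta e(S)} \gg n^{-1}$, and hence $m \ge 6\mu^2 q_0 n^2 \gg n$, so that $(6/e)^m$ comfortably beats $3^n$. The only thing to watch is that the constant $6$ in the definition of $q$ is chosen large enough to absorb both the factor of $e(S)$ lost in the greedy extraction and the entropy factor $3^n$ from the union bound; no other step is delicate, since negative correlation of isolated copies is exactly what Lemma~\ref{lemma:negcorrelation} provides.
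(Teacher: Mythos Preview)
Your proposal is correct and matches the paper's indicated approach: the paper does not spell out a proof but states that it ``follows by an application of Lemma~\ref{lemma:negcorrelation}'' (citing \cite{KK}), and your first-moment/union-bound argument built on that lemma is exactly the intended route. The greedy extraction of an $E'\sqsubseteq E(G^S)$ losing only a factor $e(S)$, the bound $\binom{|U||V|}{m}q_0^m\le(e/6)^m$, and the observation that $\beta<(v(S)-1)/e(S)$ forces $m\gg n$ so that $3^n(e/6)^m=o(1)$ are all correct.
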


The rest of this section is dedicated to the proof of Lemma~\ref{prop:canspan}. First proving that for some regimes of $p$ and for a fixed family of pairwise disjoint sets $V_1,\dots, V_{v(S)}$ with high probability we have that a big proportion of the transversal copies are actually isolated.

\begin{prop}\label{prop:gnpIsol}
	Let $S$ be a 2-balanced graph. If $p=n^{-\beta}$ and $\frac{1}{m_2(S)}<\beta \leq \frac{(v(S)-1)}{e(S)}$, then with high probability $G=G(n,p)$ satisfies the following. For
	every $\mu>0$ and every family of pairwise disjoint sets $V_1, \ldots, V_{v(S)} \subseteq V(G)$,
	with $|V_i| \geq \mu n$, we have
    
	\begin{align*}
		\E[Z(V_1,\ldots,V_{e(S)})] = \Omega\left( n^{v(S)}p^{e(S)} \right).
	\end{align*}
\end{prop}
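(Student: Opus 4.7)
The final conclusion asserts a lower bound on the expectation $\E[Z(V_1,\ldots,V_{v(S)})]$, which is a purely deterministic function of $p$ and of the sizes $|V_1|,\ldots,|V_{v(S)}|$ and carries no dependence on the realization of $G$ itself. My plan is therefore to bypass any probabilistic argument on the host graph entirely: once the inequality is established uniformly over every admissible family of disjoint sets of size at least $\mu n$, the ``with high probability'' qualifier is automatically satisfied.

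The core of the argument is a single application of linearity of expectation. I would write $Z(V_1,\ldots,V_{v(S)})$ as a sum of indicator random variables, one for each potential transversal copy of $S$ in the complete graph on $V_1 \cup \cdots \cup V_{v(S)}$; each such indicator equals $p^{e(S)}$ in expectation, producing
\[
\E[Z(V_1,\ldots,V_{v(S)})] = N(V_1,\ldots,V_{v(S)}) \cdot p^{e(S)},
\]
where $N(V_1,\ldots,V_{v(S)})$ counts the potential transversal copies. A crude lower bound on $N$ suffices: summing over all bijections $\pi : [v(S)] \to V(S)$ and all choices of one vertex $x_i \in V_i$ in each part, the assignment $\pi(i) \mapsto x_i$ determines an ordered transversal embedding of $S$, and dividing by $|\mathrm{Aut}(S)|$ to avoid overcounting the same subgraph yields $N(V_1,\ldots,V_{v(S)}) \geq \mu^{v(S)} n^{v(S)} / |\mathrm{Aut}(S)|$. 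Multiplying through by $p^{e(S)}$ gives the claimed $\Omega(n^{v(S)} p^{e(S)})$ lower bound, with implicit constant depending only on $S$ and $\mu$.

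The structural hypotheses that $S$ be $2$-balanced and that $\beta$ lie in $(1/m_2(S),\, (v(S)-1)/e(S)]$ are not actually needed for the literal statement being proved here; they will presumably be invoked in the subsequent second-moment calculation that upgrades this expectation bound into a high-probability lower bound on the number of isolated transversal copies, used in the proof of \cref{prop:canspan}. The only mild obstacle in the present argument is the combinatorial bookkeeping of the automorphism factor, which is entirely deterministic and introduces no probabilistic difficulty.
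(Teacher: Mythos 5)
Your computation of $\E[Z]$ via linearity of expectation is correct, and you are right that the literal statement displayed in the proposition is trivially true and does not use the hypotheses on $S$ or $\beta$. But this observation should have been a red flag rather than a shortcut: a conclusion that carries no dependence on the realization of $G$ is incompatible with the phrase ``with high probability $G=G(n,p)$ satisfies the following,'' and indeed the displayed conclusion is a mis-statement. What the paper actually proves here --- and what is cited in the proof of Lemma~\ref{prop:canspan} as ``by Proposition~\ref{prop:gnpIsol}, we have that $Y_G = \Omega(n^{v(S)}p^{e(S)})$'' --- is the high-probability lower bound on the number $Y$ of \emph{isolated} transversal copies, uniformly over all admissible families $V_1,\ldots,V_{v(S)}$.

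That is exactly the ``subsequent second-moment calculation'' you anticipated and deferred, but it is not deferred; it is the content of this proposition. The paper's proof proceeds as follows. Writing $\Delta := \sum_{K_2 \subsetneq J \subsetneq S} O(n^{2v(S)-v(J)}p^{2e(S)-e(J)})$ for the standard Janson overlap term, the $2$-balancedness of $S$ together with $\beta > 1/m_2(S)$ forces $n^{v(S)-v(J)}p^{e(S)-e(J)} = o(1)$ for every proper subgraph $J$ with an edge, hence $\Delta = o(\E Z)$. Janson's inequality then gives $\P(Z \le \tfrac{3}{4}\E Z) \le \exp(-\Omega(\E Z))$, and the condition $\beta \le (v(S)-1)/e(S)$ ensures $\E Z = \Omega(n^{v(S)}p^{e(S)}) \gg n$, so a union bound over the (at most exponentially many in $n$) choices of $V_1,\ldots,V_{v(S)}$ survives. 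Finally, letting $X$ count copies of unions of two edge-overlapping copies of $S$, one has $\E X = O(\Delta) = o(\E Z)$, so by Markov's inequality $X = o(\E Z)$ with high probability; subtracting off these overlapping copies shows $Y = \Omega(n^{v(S)}p^{e(S)})$. Your proposal supplies only the expectation estimate, which is the easy ingredient, and omits the concentration, the union bound, and the discounting of non-isolated copies --- precisely the places where both structural hypotheses are used. As a proof of the proposition as it is actually used, it has a genuine gap.
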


\begin{proof}
	Let $X$ be the number of copies of any graph that is a union of two copies of $S$ intersecting in at least one edge and which is not $S$. Let $V_1,\dots V_{v(S)}\subset V(G)$ be a fixed family of pairwise disjoint sets. The proof lies basically on showing that

	\begin{equation}\label{Ex}
		\Delta
        := \sum_{K_2\subset J \subset S}
        O\left(n^{2v(S)-v(J)}p^{2e(S)-e(J)}\right)
        = o(\mathbb{E}(Z)).
	\end{equation}
	
	Indeed, since $S$ is 2-balanced we have that
	\[
      \frac{v(S)-v(J)}{e(S)-e(J)} \leq \frac{1}{m_2(S)}
      \implies n^{v(S)-v(J)}p^{e(S)-e(J)} = o(1),
    \]
	for every $K_2\subset J\subset S$, which proves \eqref{Ex} since $\mathbb{E}Z=\Omega(n^{v(S)}p^{e(S)})$. By Janson's inequality~\cite{janson} and by the fact that $\Delta<\mathbb{E}Z$, we have that
	\[
      \mathbb{P}\left(Z\leq 3\mathbb{E}Z/4\right)
      = \exp (-\Omega(\mathbb{E}Z)).
    \]
	
	Since $\mathbb{E}Z=\Omega(n^{v(S)}p^{e(S)})\gg n$, by taking union bound $Z\geq 3\mathbb{Z}/4$ for every choice of $V_1,\dots, V_{v(S)}$.

		 Moreover, since $\mathbb{E}X= O(\Delta)$, then by Markov's inequality we have that with high probability $X=o(\mathbb{E}Z)$ and, therefore, we can discount every copy of $S$ which shares at least an edge with other copy, to finish the proof.
\end{proof}

We use Proposition~\ref{prop:gnpIsol} together with Azuma's inequality to prove Lemma~\ref{prop:canspan} and finish this section.

\begin{proof}[Proof of Lemma~\ref{prop:canspan}]
	Let $G=G(n,p)$, with $p=n^{-\beta}$ and $1/m_2(S)<\beta \leq (v(S)-1)/e(S))$, and let $c:E(G)\rightarrow [r]$ be a proper edge-coloring of $G$, for some $r \in \mathbb{N}$. For an integer $T>0$ consider a random partition of the colors into $T$ classes. For each $i \in [r]$, let $X_i$ be the indicator function for the event $\sigma(i) =
	t$ and observe that $Y_{G_t}$ is a function of $X_1, \ldots, X_{r}$. For each $i\in [r]$, let
	$c_i = c_i(G)$ be the smallest real number such that if we change the value of $X_i$ only, then
	the value of $Y_{G_t}$ will be altered by at most $c_i$. Since the coloring of $G$ is proper, by
	altering the value of $X_i$ we add or remove at most a perfect matching from $G_t$, which implies
	that it will affect at most $n$ isolated copies of $S$. Therefore, we have $c_i \leq n$.
	Furthermore, since a transversal isolated copy of $S$ in $G$ can be affected by at most $e(S)$
	changes in the value of $X_1,\ldots,X_{r}$, we also have

    \begin{align*}
		\sum_{i=1}^{r} c_i
		\leq e(S) Y_G.
	\end{align*}
	Hence,

    \begin{align*}
		\sum_{i=1}^{r} c_i^2
		\leq n \sum_{i=1}^{r} c_i
		\leq e(S) n Y_G.
	\end{align*}
	Furthermore, notice that each copy of $S$ in $G$ belongs to $G_t$ with probability $(1/T)^k$, where $k$ is the number of colors that appears in such copy of $S$. In particular, such copy of $S$ is in $G_t$ with probability at least $(1/T)^{e(S)}$. Therefore,

    \begin{align*}
		\E_T[Y_{G_t}]
		\geq \frac{Y_G}{T^{e(S)}}.
	\end{align*}
	
	Note that, by Proposition~\ref{prop:gnpIsol}, we have that $Y_G =
	\Omega(n^{v(S)}p^{e(S)})$. Therefore, Lemma~\ref{lemma:azuma} yields that

    \begin{align*}
		\P_T\left[|Y_{G_t} - \E[Y_{G_t}]| \geq \frac{1}{2}\E[Y_{G_t}]\right]
		&\leq 2 \exp \left\{ - \frac{{\E[Y_{G_t}]}^2}{2\sum_{i \in [q]} {c_i}^2} \right\}\\
		&\leq 2 \exp \left\{ - \frac{Y_G^2}{T^{2e(S)}e(S) n Y_G} \right\}\\
		&= \exp\left\{- \Omega(n^{v(S) -1}p^{e(S)})\right\}\\
		&= o(1).
	\end{align*}
	The last line is due to the fact that $\beta < (v(S)-1)/e(S)$. Consequently, with probability $1-o(1)$ under $\mathbb{P}_T$, we have that

    \begin{align*}
		Y_{G_t}
		\geq \frac{1}{2} \E[Y_{G_t}]
		\geq \frac{Y_G}{2T^{e(S)}}
		\geq \alpha n^{v(S)}p^{e(S)},
	\end{align*}
	for some $\alpha > 0$ that only depends on the graph $S$ and the values of $T$ and $\mu$.

\end{proof}

\section{Proof of Theorem~\ref{thm:main}}\label{section:main}

Let $H$, $F$ and $S$ be as in the statement of Theorem~\ref{thm:main}. 
Let us say that $V(H) =\{u_1, \ldots, u_h\}$, $V(S) = \{v_1, \ldots, v_{s}\}$ and $V(F) = \{w_1, \ldots, w_f\}$, where $u_1 = v_1$ and $u_2 = v_2$.
We consider an equi-partition of $[n]$ into $ s + h - 2$ sets

\begin{eqnarray*}
	V = \left( \bigcup_{i=1}^{s} V_i\right) \cup \left( \bigcup_{i=3}^{h} U_i\right).
\end{eqnarray*}

For technical reasons related to the Theorem~\ref{thm:KLR}, instead of working in $G(n,p)$, we work on the random graph $G$ obtained in the following way. Each pair of vertices in $V$ contained in $\cup_{i=1}^{s}V_i$ forms an edge with probability $p$ and every pair of vertices in $V$ forms an edge with probability $q'=e(H)q$, where $q := 6e(S)n^{v(S)-2}p^{e(S)}$, all independently from each other. Notice that $q' < p$, and therefore, since we are dealing with a monotone property, in order to prove Theorem~\ref{thm:main}, it suffices to prove that with high probability the random graph $G$ satisfies $G\flecha F \oplus H$.

Let $c:E(G)\rightarrow \mathbb{N}$ be a proper coloring of $E(G)$. 
For each color $i \in e(E(G))$, we assign
independently and uniformly at random an edge $\sigma(i) \in E(H)$. For each $u_i u_j \in E(H)$, let
$G_{u_i u_j}$ be the spanning subgraph of $G$ with edge set

\begin{align*}
	E(G_{u_i u_j}) = \{e \in E(G): \sigma(c(e)) = u_i u_j\}.
\end{align*}
That is, $E(G_{u_i u_j})$ are those edges of $E(G)$ for which their color was assigned to $u_i u_j$.

Recall that $Y_{G_{u_1u_2}}(V_1, \ldots, V_s)$ denotes the number of  transversal isolated copies of $S$ in $G_{u_1u_2}[V_1, \ldots, V_{s}]$ (c.f. Section~\ref{subsec:isolated}).
By Lemma~\ref{prop:canspan}, there exists $\alpha>0$ such that the following holds with high probability.
For every proper edge-coloring $c$ of $G$ we have

\begin{align}\label{isolado}
	\P_{e(H)}\big(Y_{G_{u_1u_2}} \ge \alpha n^{v(S)}p^{e(S)}\big) = 1-o(1). 
\end{align}
From now on, we assume that $\alpha \in (0, s^{-2f})$.
For an outcome $G$ and a proper edge-coloring $c:E(G)\to \mathbb{N}$, we denote by $\mathcal{E}_1=\mathcal{E}_1(G,c)$ the event in which $Y_{G_{u_1u_2}} \ge \alpha n^{v(S)}p^{e(S)}$. Note that $\mathcal{E}_1$ is an event in the probability space given by $\sigma$.

Let $G^{S}(V_{[s]})$ be the spanning subgraph of $G$ induced by all the edges that belong to isolated copies of $S$ in $V_{[s]}:= V_1 \cup \cdots \cup V_s$.
By Lemma~\ref{lemma:upperuniff}, $G^{S}(V_{[s]})$ is $(\mu, q)$-upper uniform with high probability for any constant $\mu > 0$.
As we need regularity to apply Theorem \ref{thm:KLR}, from now on we fix $\eps \in (0,1/8)$ (to be chosen later) and we let $\mu=\mu(\eps,\alpha)>0$ be given by Lemma~\ref{tools:lemma2}.

Let $\cP$ be the set of graphs $J$ for which $J^{S}(V_{[s]})$ is $(\mu, q)$-upper uniform.
Our next claim states that, if $G \in \cP$ and the event $\mathcal{E}_1$ occurs, then then following holds.
There is a large and dense regular bipartite graph subgraph of $G_{u_1u_2}[V_1,V_2]$ whose edges are contained in distinct isolated rainbow copies of $F$.

\begin{claim}\label{claim:regF}
	Suppose that $G\in\cP$ and let $c: E(G)\to \mathbb{N}$ be a proper edge-coloring. If the event $\mathcal{E}_1$ occurs, then the following holds.
	For some $i_1,i_2 \in [s]$ there exist
	$W_{i_1}\subseteq V_{i_1}$, $W_{i_2}\subseteq V_{i_2}$ with $|W_{i_1}|=|W_{i_2}|\geq \mu |V_{i_1}|$
	and a bipartite graph $B_{12}\subset G_{u_1u_2}[W_{i_1},W_{i_2}]$ such that

	\begin{enumerate}[label=(\arabic*),itemsep=5pt]
		\item For every $ab \in E(B_{12})$, with $a\in W_{i_1}$ and $b\in W_{i_2}$, there is an isolated
		rainbow copy of $F$ in $V_{[s]}$ containing $ab$ whose colors are assigned to $u_1u_2$. Moreover, in these copies the vertices $a$ and $b$ correspond to the vertices $w_1$ and $w_2$, respectively.
		\item The graph $B_{12}$ is $(\eps,q)$-regular with density at least $\alpha^2 q$
	\end{enumerate}
\end{claim}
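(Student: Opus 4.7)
My plan is to build $B_{12}$ by extracting, from each isolated transversal copy of $S$ in $G_{u_1u_2}[V_{[s]}]$, a rainbow copy of $F$ (using the assumption $S\flecha F$), pigeonholing over where the labeled vertices $w_1,w_2$ land, and then regularizing the resulting bipartite graph of $w_1w_2$-edges via Lemmas~\ref{tools:lemma2} and~\ref{lemma:slicing}.

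The event $\mathcal{E}_1$ will provide at least $Y\geq\alpha n^{v(S)}p^{e(S)}$ transversal isolated copies of $S$ in $G_{u_1u_2}[V_1,\ldots,V_s]$. Each such copy inherits a proper edge-coloring from $c$, so by $S\flecha F$ it must contain at least one rainbow copy of $F$, and I will fix one such $F$ per $S$-copy. Each chosen $F$ places $w_1$ in some $V_j$ and $w_2$ in some $V_k$ with $j\neq k$, so pigeonholing over the $s(s-1)$ ordered index pairs will yield indices $(i_1,i_2)$ for which at least $Y/(s(s-1))$ of these $F$'s satisfy $w_1\in V_{i_1}$ and $w_2\in V_{i_2}$. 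Let $E^*$ be the set of $w_1w_2$-edges of these selected $F$-copies; since the underlying $S$-copies are edge-disjoint, the edges of $E^*$ are distinct, giving $|E^*|=\Omega(n^{v(S)}p^{e(S)})=\Omega(qn^2)$. Hence the bipartite graph $B_{12}^{0}$ on $(V_{i_1},V_{i_2})$ with edge set $E^*$ will have density at least $\alpha' q$ for some $\alpha'=\alpha'(\alpha,s,h)>0$.

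Next, because $E^*\subseteq E(G^{S}(V_{[s]}))$ and $G\in\cP$, the graph $G^{S}(V_{[s]})$ is $(\mu,q)$-upper uniform, and this transfers to $B_{12}^{0}$ (with only a constant-factor loss, since $|V_{i_1}|$ is a fixed fraction of $|V_{[s]}|$). I will then apply Lemma~\ref{tools:lemma2} to obtain $W_{i_1}'\subseteq V_{i_1}$ and $W_{i_2}'\subseteq V_{i_2}$, each of size at least $\mu|V_{i_1}|$, inducing an $(\eps,q)$-regular subgraph of density still at least $\alpha' q$. To force $|W_{i_1}|=|W_{i_2}|$ I will discard a few vertices from the larger side and invoke Lemma~\ref{lemma:slicing} to preserve regularity (with a slightly worse constant that can be absorbed by picking $\eps$ smaller at the outset). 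The assumption $\alpha\in(0,s^{-2f})$ is large enough to let me guarantee $\alpha'\geq\alpha^2$, so the final density will exceed $\alpha^2 q$, establishing~(2). Condition~(1) is automatic from the construction: every edge of $B_{12}$ lies in $E^*$ and is by definition the $w_1w_2$-edge of some rainbow $F$-copy sitting inside an isolated copy of $S$ in $G_{u_1u_2}[V_{[s]}]$.

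The only real difficulty is bookkeeping: $\mu$ must be chosen small in terms of $\eps$ and the target density constant $\alpha^2$, and $\alpha$ small enough that $\alpha'$ (which absorbs the pigeonhole factor $1/s(s-1)$ together with the minor losses from slicing and size-equalization) still dominates $\alpha^2$. The conceptual content — producing a rainbow $F$ in each isolated $S$-copy via $S\flecha F$, and then extracting regularity from upper uniformity — is routine.
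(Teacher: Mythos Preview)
Your proposal is correct and follows essentially the same approach as the paper: extract a rainbow $F$ from each isolated transversal $S$-copy via $S\flecha F$, pigeonhole to fix where $w_1,w_2$ land, observe the resulting bipartite graph of $w_1w_2$-edges is dense and (as a subgraph of $G^S(V_{[s]})$) upper uniform, and apply Lemma~\ref{tools:lemma2}. The only differences are cosmetic --- you pigeonhole over the $s(s-1)$ placements of $(w_1,w_2)$ rather than the paper's $s^{f}$ placements of all of $F$, and you are more explicit than the paper about equalizing $|W_{i_1}|=|W_{i_2}|$ via Lemma~\ref{lemma:slicing}.
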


\begin{claimproof}
	Since $S \flecha F$, in each  transversal isolated copy of $S$ in $G_{u_1u_2}[V_1,\ldots,V_{s}]$
	we can find an  transversal isolated rainbow copy of $F$. Note that there are at most ${s}^f$
	different ways for a copy of $F$ to be transversal in $G_{u_1u_2}[V_1,\ldots,V_{s}]$. As $\mathcal{E}_1$ holds, by the
	pigeon-hole principle we have for some $i_1, \ldots, i_f \in [s]$ at least

    \begin{align*}
		\frac{\alpha}{{s}^f} n^{v(S)}p^{e(S)}
	\end{align*}
	transversal isolated rainbow copies of $F$ in $G_{u_1u_2}[V_{i_1},\ldots,V_{i_f}]$ with the
	corresponding copy of $w_t$ belonging to $V_{i_t}$, for each $t \in [f]$. We turn our attention to
	the bipartite graph $B_{12}=(V_{i_1}\cup V_{i_2};E')$ induced by the pairs contained in those copies of $F$.
	Observe that $B_{12}$ already satisfies property (1). Note that each edge of $B_{12}$ is in
	exactly one of the previously considered copies of $F$. Therefore, for $\alpha < s^{-2f}$ we have

    \begin{align*}
		E(B_{12})\geq \frac{\alpha}{{s}^f} n^{v(S)}p^{e(S)} =
		\frac{\alpha}{6{s}^f e(S)}qn^2 \geq  \alpha^2q|V_{i_1}||V_{i_2}|.
	\end{align*}

	As $G \in \cP$, the graph $G^{S}(V_{[s]})$ is $(\mu,q)$-upper uniform and so it is $B_{12}$.
	Moreover, as $d_{B_{12}}(V_{i_1},V_{i_2})\ge \alpha^2 q$, we can apply Lemma~\ref{tools:lemma2} to show that the following holds.
	There exists $W_{i_1}\subseteq V_{i_1}$, $W_{i_2}\subseteq
	V_{i_2}$ with $|W_{i_1}|=|W_{i_2}|\geq \mu |V_{i_2}|$, such that the bipartite graph
	$B_{12}[W_{i_1},W_{i_2}]$ is $(\eps,q)$-regular with density at least $\alpha^2q$. This shows property (2).

\end{claimproof}

From now on we assume that, if the event $\mathcal{E}_1$ occurs, then in Claim~\ref{claim:regF} we have $i_1 = 1$ and $i_2 = 2$.
We shall denote by $W_1$ and $W_2$ the sets obtained from this claim.

For a pair $ij$ such that $u_iu_j \in E(H)\setminus \{u_1u_2\}$, let $G^{ij}$ be the graph induced by $G$ on $U_i\cup U_j$.
Our next goal is, roughly speaking, to show that there exists a large and dense regular bipartite graph in $G_{u_i u_j}[U_i,U_j]$.
The density will be given by the concentration on the degrees (c.f. Lemma~\ref{lemma:concdeg}).
The regularity will be given by the degrees and the $\disc$ property (c.f. Lemma~\ref{lemma:discGnp}) combined with Lemma~\ref{lemma:disc}.

Recall that each edge in $G^{ij}$ is included independently with probability $q'=e(H)q$, where $q = 6e(S)n^{v(S)-2}p^{e(S)}$.
For $\delta >0$, an outcome $G$ and a proper edge-coloring $c:E(G)\to \mathbb{N}$, we denote by $\mathcal{E}_2=\mathcal{E}_2(G,c, \delta)$ the event in which 

\begin{align}\label{almostthere}
	d_{G_{u_iu_j}^{ij}}(v) = (1 \pm  \delta) qv(G^{ij})
\end{align}
for all $v \in U_i \cup U_j$ and pairs $ij$ such that $u_iu_j \in E(H)\setminus \{u_1u_2\}$.
Note that $\cE_2$ is an event in the probability space given by $\sigma$.
By Lemma~\ref{lemma:concdeg}, the following holds with high probability for any fixed $\delta >0$.
For every proper edge-coloring of $G^{ij}$ we have

\begin{align}\label{degree}
	\mathbb{P}_{e(H)}\left( \mathcal{E}_2 \right)=1-o(1).
\end{align}
From now on, we fix $\delta(\eps, \mu/2) \in (0,1/4)$ given by Lemma~\ref{lemma:disc}. 
Applying Lemma~\ref{lemma:concdeg} with $\mu/2$ instead of $\mu$ it is only a subtle technicality.
This choice ensures that we have a regular bipartite graph with classes of size $|W_1|$, which will be necessary to apply Theorem \ref{thm:KLR}.

Let $\eps' = \eps'(\eps,\mu/2) \in (0,\mu^22^{-5sh})$ be given by Lemma~\ref{lemma:disc}.
For an outcome $G$ and a proper edge-coloring $c:E(G)\to \mathbb{N}$,
define $\cE_3=\cE_3(G,c)$ to be the event in which $G_{u_iu_j}^{ij}$ satisfies $\mathrm{DISC}(\eps')$.
By Lemma~\ref{lemma:discGnp}, with high probability we have that for every proper edge-coloring 

\begin{align}	
	\mathbb{P}_{e(H)} \big(\cE_3 \big)=1-o(1).
\end{align}

Our next claim states that if $G \in \cP$ and the event $\cE_2 \cap \cE_3$ holds, then we can find a large and dense regular bipartite graph in $G_{u_i u_j}[U_i,U_j]$.

\begin{claim}\label{claim:Hpart}
	For a fixed outcome of $G$ and a proper edge-coloring $c$, suppose that the event $\cE_2 \cap \cE_3$ occurs. Then, 
	for $i\in[h]\setminus\{1,2\}$ there exists $W_i\subset U_i$,
	with $|W_i|=|W_1|$, such that the following holds. For every $u_i u_j\in
	E(H)\setminus\{u_1u_2\}$ the bipartite graph $G_{u_i u_j}[W_i,W_j]$ is $(\varepsilon,q)$-regular
	with density at least $\alpha^2q$.
\end{claim}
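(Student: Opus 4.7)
The plan is to select the sets $W_3,\dots,W_h$ essentially arbitrarily and then derive both the regularity and the density lower bound from a single application of Lemma~\ref{lemma:disc} to each bipartite graph $G_{u_iu_j}^{ij}$.

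First I pick the sets. Since the partition is equitable, $|U_i|=|V_1|$ for every $i\in[h]$, and Claim~\ref{claim:regF} gives $|W_1|\leq|V_1|$, so for each $i\in\{3,\dots,h\}$ I may choose $W_i\subseteq U_i$ with $|W_i|=|W_1|$. Because $|W_1|\geq\mu|V_1|$, this forces $|W_i|\geq\mu|U_i|$ for every $i\in[h]$, and hence $|W_i|,|W_j|\geq(\mu/2)\,v(G^{ij})$ for every edge $u_iu_j\in E(H)\setminus\{u_1u_2\}$, since $v(G^{ij})=|U_i|+|U_j|=2|U_i|$ under the equipartition.

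Next I apply Lemma~\ref{lemma:disc}. The parameters $\delta$ and $\varepsilon'$ were fixed to be exactly the outputs of Lemma~\ref{lemma:disc} for inputs $\varepsilon$ and $\mu/2$. On the event $\cE_2\cap\cE_3$, the graph $G_{u_iu_j}^{ij}$ satisfies the required degree concentration $|\deg(v)-qv(G^{ij})|\le\delta\,qv(G^{ij})$ (from $\cE_2$) together with $\mathrm{DISC}(\varepsilon')$ (from $\cE_3$), with the role of $p$ played by $q$ and of $n$ by $v(G^{ij})$. Lemma~\ref{lemma:disc} therefore delivers $(\varepsilon,q)$-regularity of $G_{u_iu_j}[W_i,W_j]$ for every $u_iu_j\in E(H)\setminus\{u_1u_2\}$. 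The density bound follows by evaluating $\mathrm{DISC}(\varepsilon')$ at the pair $(W_i,W_j)$: using the volume estimates furnished by $\cE_2$ and the lower bound $|W_i|,|W_j|\ge(\mu/2)v(G^{ij})$, one obtains $d_{G_{u_iu_j}}(W_i,W_j)\ge(1-O(\varepsilon'/\mu^2+\delta))q$, which exceeds $\alpha^2q$ by the parameter hierarchy $\varepsilon'<\mu^2 2^{-5sh}$, $\delta<1/4$, and $\alpha<s^{-2f}<1$.

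The argument has no genuine obstacle; its content is purely bookkeeping of parameter dependencies. The setup preceding the claim has been tuned so that $\cE_2$ and $\cE_3$ match the hypotheses of Lemma~\ref{lemma:disc} exactly when it is applied with parameter $\mu/2$ rather than $\mu$, which leaves just enough slack to pass from the pair $(U_i,U_j)$ to the smaller pair $(W_i,W_j)$ of common size $|W_1|\geq\mu|U_i|$.
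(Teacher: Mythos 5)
Your proposal is correct and follows essentially the same route as the paper: choose arbitrary $W_i\subseteq U_i$ of size $|W_1|$, feed the event $\cE_2\cap\cE_3$ into Lemma~\ref{lemma:disc} (with parameter $\mu/2$ to account for passing from $U_i$ to the smaller $W_i$) to get $(\varepsilon,q)$-regularity, and then extract the density lower bound by plugging $(W_i,W_j)$ into the $\mathrm{DISC}(\varepsilon')$ inequality together with the volume estimates coming from the degree concentration in $\cE_2$. The paper carries out the same volume computation explicitly, writing $\mathrm{vol}(G_{u_iu_j}^{ij})<2qk^2$ and $\mathrm{vol}(W_i)\ge qk|W_i|/2$ to conclude $e(G_{u_iu_j}[W_i,W_j])\ge q|W_i||W_j|/16\ge\alpha^2q|W_i||W_j|$, which is what you summarize as $d\ge(1-O(\varepsilon'/\mu^2+\delta))q\ge\alpha^2 q$; there is no substantive difference.
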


\begin{claimproof}
	Under the event $\mathcal{E}_2 \cap \mathcal{E}_3$, Lemma~\ref{lemma:disc} guarantees that the following holds.
	For $u_i u_j\in
	E(H)\setminus\{u_1u_2\}$ and any disjoint subsets $W_i\subset U_i$ and $W_j\subset U_j$ such that $|W_i|=|W_j|=|W_1|$, 
	the bipartite graph $G_{u_i u_j}[W_i,W_j]$ is $(\varepsilon,q)$-regular.
	Fix any choice for those sets $W_i$. 
	Now we are left to show that $G_{u_iu_j}[W_i,W_j]$ has density at least $\alpha^2q$.

	As $G_{u_iu_j}^{ij}$ satisfies $\mathrm{DISC}(\eps')$, we have

    \begin{align}\label{eq:boundedgesdisc}
		e(G_{u_iu_j}[W_i, W_j])
		& \geq \frac{\mathrm{vol}(W_i)\mathrm{vol}(W_j)}{\mathrm{vol}(G_{u_iu_j}^{ij})} - \varepsilon' \cdot \mathrm{vol}(G_{u_iu_j}^{ij}),
	\end{align}
	where the volume is over $G_{u_iu_j}^{ij}$.
	By simplicity, set $k:=v(G^{ij})$.
	By~\eqref{almostthere}, we have $\mathrm{vol}(G_{u_iu_j}^{ij}) < 2qk^2$. Moreover, the volumes $\mathrm{vol}(W_i)$ and $\mathrm{vol}(W_j)$ are both lower bounded by $qk|W_i|/2$.
	Therefore, it follows from~\eqref{eq:boundedgesdisc} that

    \begin{align*}
		e(G_{u_iu_j}[W_i, W_j])
		& \geq {\left(\frac{ qk}{2} \right)}^2\left(\frac{1}{2qk^2}\right)|W_i||W_j|-2\eps' qk^2\\
		& \geq \frac{q|W_i||W_j|}{16}.
	\end{align*}
	In the last inequality, we used that $\varepsilon' \le \mu^22^{-5sh}$.
	As $\alpha \le 1/8$, it follows that the density in $G_{u_i u_j}[W_i,W_j]$ is at least $\alpha^2q$.
\end{claimproof}

By Lemma~\ref{lemma:upperuniff}, with high probability we have that $G \in \cP$.
By combining this with Lemmas~\ref{lemma:concdeg},~\ref{lemma:discGnp} and~\ref{prop:canspan}, with high probability every proper edge-coloring of $G$ satisfies
\[\mathbb{P}_{e(H)}(\mathcal{E}_1\cap\mathcal{E}_2\cap \mathcal{E}_3)=1-o(1).\]
From this we infer that for a typical sample of $G$ and a fixed proper edge-coloring $c$ there must exist an assignment $\sigma'$ of colors of $c$ into $E(H)$ such that the conclusion of Claims~\ref{claim:regF} and~\ref{claim:Hpart}
hold. Therefore, in this setup we find a subgraph $G'[W_1,\ldots,W_h]$ of $G$ with the following properties:

\begin{enumerate}[label=(\alph*),itemsep=5pt]
	\item $|W_i|=n_0$ for all $i \in [h]$, for some $n_0 \ge \mu|V_1|$;
	\item  $G'[W_i,W_j] \se G_{u_i u_j}[W_i,W_j]$ is $(2\varepsilon /\mu, m/n^2)$-regular and has $m=\alpha^2qn_0^2\gg n$ edges
	for all $u_iu_j \in E(H)$;
	\item 
	For every $ab \in G'[W_1,W_2]$, with $a\in W_1$ and $b\in W_2$, there is a
	copy $F_{ab}$ of $F$ in $V_{[s]}$ containing $ab$.  Moreover, in these copies the vertices $a$ and $b$ correspond to the vertices $w_1$ and $w_2$, respectively.
	\item $F_{ab}$ is a rainbow graph  whose colors are assigned to $u_1u_2$.
\end{enumerate}
Observe that properties $(a)$ and $(b)$ guarantee that $G' \in \mathcal{G}(H,n_0,m,m/n_0^2, \eps)$.

Now we claim that if $G'$ contains a transversal copy of $H$, then $G$ contains a rainbow copy of $F \oplus H$ in the coloring $c$.
In fact, as the edges in $G'[W_i,W_j]$ only use colors assigned to $u_i u_j$, any transversal copy of $H$ in $ G'$ is rainbow.
Moreover, by item $(c)$, each transversal copy of $H$ in $G'$ can be extended to a copy of $F\oplus H$ in $G$, where $F$ is is rainbow and only uses colors assigned to $u_1u_2$. Therefore, we conclude that the copy of $F\oplus H$ we found is also rainbow.

Let $\gamma \in (0,1)$ be a constant to be chosen later and let $B(\gamma)$ be given by Theorem \ref{thm:KLR}.
Let $p \ge Cn^{-\beta (H, S)}$, where $C>B(\gamma)$ is another constant to be chosen later.
We observe that the constants chosen before do not depend on these parameters.
Let $\mathcal{G}^*(n)=\mathcal{G}^*$ be the family of graphs $G'$ such that $V(G')\se [n]$, the items $(a)$-$(c)$ are satisfied and no transversal copy of $H$ is contained in $G'$.
If $G \naoflecha H \oplus F$ and $G$ is a typical graph then, by the discussion above, there is a coloring $c$ and graph $G'(c) \se G$ satisfying items $(a)$-$(d)$ such that it does not contain a transversal copy of $H$.
In particular, we have

\begin{align*}	
	\P \big ( G \naoflecha F \oplus H \big ) \le \P \left( \bigcup_{G' \in \G^{*}} \big \{G' \se G \big \} \right)+o(1).
\end{align*}
By~\cref{lemma:negcorrelation} and properties $(b)$-$(c)$, we have

\begin{align*}	
	\mathbb{P}\big(G'[W_1,W_2] \sqsubseteq G^F \big) \leq q^m 
	\qquad \text{and} \qquad
	\mathbb{P}\big( G'[W_i,W_j]\subset G\big) \leq q^m
\end{align*}
for $u_i u_j\in E(H)\setminus \{u_1,u_2\}$, and hence

\begin{align}\label{almostfinal}
	\P \big ( G \naoflecha F \oplus H \big ) \le |\G^{*}|q^{e(H)m}+o(1).
\end{align}

Now we move to the application of Theorem \ref{thm:KLR}
Let $\varepsilon(\gamma) \in (0,1/2)$ given by Theorem~\ref{thm:KLR}. 
As long as $m > B n^{2-1/m_2(H)}$, we have

\begin{align*}
	\left| \G^{*} \right| \leq 2^{e(H)n}\gamma^m \binom{n_0^2}{m}^{e(H)}.
\end{align*}
The factor of $2^{e(H)n}$ accounts for the choices of the subsets $W_1,\ldots,W_h$.
To see if $m > B n^{2-1/m_2(H)}$, recall that $m=\alpha^2qn_0^2$ and $q = 6e(S)n^{v(S)-2}p^{e(S)}$. It follows that

\begin{align*}
	m & > \alpha^2n_0^{v(S)}p^{e(S)}\\ 
	& >  \alpha^2 n_0^{s} C^{e(S)}n^{-s+2-1/m_2(H)}\\ 
	&> \alpha^2 C^{e(S)}\left(\dfrac{\mu}{sh}\right)^{s} n^{2-1/m_2(H)}.
\end{align*}
In the last inequality, we used that $n_0 > \mu |V_1| > \frac{\mu}{sh}n$.
Therefore, in order to apply Theorem \ref{thm:KLR}, we take 
\[C(\gamma):= \left(\left(\dfrac{sh}{\mu}\right)^s \dfrac{B(\gamma)}{\alpha^2}\right)^{1/e(S)}.\]
Observe that $C$ is, in fact, only a function of $\gamma$. Indeed, $\alpha \in (0,s^{-2f})$ is given by Lemma~\ref{prop:canspan} and it only depends on the original partition. That is, $\alpha$ is a function of $h$ and $s$.
The constant $\mu$ only depends on $\eps$ and $\alpha$, but $\eps$ depends on $\gamma$ and it is given by Theorem \ref{thm:KLR}.

Finally, by using the estimate $|\mathcal{G}^*|\le 2^{e(H)n}\gamma^m\binom{n_0^2}{m}^{e(H)}$, we obtain from~\eqref{almostfinal} that

\begin{align*}
	\P \big ( G \naoflecha F \oplus H \big ) & \le
	2^{e(H)n} \gamma^{m} \dbinom{n_0^2}{m}^{e(H)}q^{e(H)m} +o(1)\\
	& \le \gamma^{m}{\left(\frac{ 2e qn_0^2}{m}\right)}^{e(H)m} +o(1)\\
	&\leq \gamma^{m}{\left(\frac{2e}{\alpha^2}\right)}^{e(H)m}+o(1).
\end{align*}
In the second inequality above, we used that $n \ll m$ and that $\binom{a}{b}\le (\frac{ea}{b})^b$ for $a \ge b \ge 1$.  Choose $\gamma := (8e/\alpha^2)^{-e(H)}$.
Thus we conclude that 
\[ \P \big ( G \naoflecha F \oplus H \big ) \le 2^{-m}+o(1),\]
\noindent which finishes our proof.

\section{Book graphs}\label{sec:split}

Recall that, for a positive integer $t$, the book graph $B_t$ is defined by $t$ triangles sharing one edge. In this section we prove that book graphs fit the framework of Theorem~\ref{thm:main}. We start with the following result.

\begin{lemma}\label{lemma1}
	$B_{3t-2} \flecha B_{2,t}$ for every $t\geq 1$.
\end{lemma}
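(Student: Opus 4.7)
The plan is to fix a proper edge-coloring $c$ of $B_{3t-2}$, label its vertices $u,v,w_1,\dots,w_{3t-2}$ with $uv$ as the spine and each $uvw_i$ as a triangular page, and reduce the search for a rainbow $B_t$ to an independent-set problem in an auxiliary graph. The key preliminary observation is that every triangle of $B_{3t-2}$ contains the spine edge $uv$, so any $B_t$-subgraph of $B_{3t-2}$ must also use $uv$ as its spine. Hence it is enough to find a set $I\subseteq[3t-2]$ with $|I|=t$ such that the $2t+1$ colors $c(uv)$, $\{c(uw_i):i\in I\}$ and $\{c(vw_i):i\in I\}$ are all distinct.

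Write $a_i:=c(uw_i)$ and $b_i:=c(vw_i)$. Properness of $c$ at $u$ and at $v$ gives that the sequences $(a_i)$ and $(b_i)$ are each pairwise distinct and avoid $c(uv)$; properness at $w_i$ gives $a_i\neq b_i$. Thus the only possible obstruction to rainbowness within a candidate set $I$ is a \emph{cross-clash} $a_i=b_j$ with $i\neq j$. I encode these clashes by an auxiliary graph $\Gamma$ on the vertex set $[3t-2]$, placing an edge $ij$ whenever $a_i=b_j$ or $a_j=b_i$. A rainbow copy of $B_t$ then corresponds exactly to an independent set of size $t$ in $\Gamma$.

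The main claim is that $\Delta(\Gamma)\leq 2$: for a fixed $i$, since the $b$-values are pairwise distinct there is at most one $j$ with $a_i=b_j$, and since the $a$-values are pairwise distinct there is at most one $j$ with $a_j=b_i$; together these conditions contribute at most two neighbors of $i$. The standard bound $\alpha(\Gamma)\geq v(\Gamma)/(\Delta(\Gamma)+1)$ applied to the $(3t-2)$-vertex graph $\Gamma$ then yields $\alpha(\Gamma)\geq \lceil(3t-2)/3\rceil=t$ for every $t\geq 1$, which produces the required independent set and hence a rainbow $B_t$. I do not expect any serious obstacle here: the argument is entirely elementary, and the only mildly fiddly point is the max-degree verification (together with the observation that $\Gamma$ is loopless, since $a_i\neq b_i$, and that the $t=1$ case is trivial because $B_1=K_3$ is rainbow in every proper coloring).
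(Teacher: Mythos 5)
Your proof is correct and takes a genuinely different route from the paper's. The paper proceeds by induction on $t$: the inductive hypothesis applied to the copy of $B_{3(t-1)-2}$ sitting inside $B_{3t-2}$ yields a rainbow $B_{t-1}$ on some $t-1$ pages, and a counting argument then shows that among the remaining $2t-1$ candidate pages at most $2(t-1)$ can cross-clash with the chosen ones, so at least one page extends the rainbow book to a rainbow $B_t$. Your argument is a direct, non-inductive repackaging of the same core observation: the only possible obstruction is a cross-clash $a_i=b_j$ with $i\neq j$ (properness at $u$, $v$, and each $w_i$ rules out everything else), and each index can participate in at most two such clashes because the $a$'s and the $b$'s are each pairwise distinct. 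Encoding clashes as edges of an auxiliary graph $\Gamma$ with $\Delta(\Gamma)\le 2$ and applying the greedy bound $\alpha(\Gamma)\ge v(\Gamma)/(\Delta(\Gamma)+1)\ge(3t-2)/3$, hence $\alpha(\Gamma)\ge t$, immediately produces the $t$ mutually compatible pages. This is arguably cleaner: it dispenses with the induction, makes transparent why $3t-2$ is exactly the right count of pages, and even points to the extremal configuration (a conflict graph that is a disjoint union of triangles) that shows the bound is tight for this method.
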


\begin{proof}
	The base case $t=1$ is trivial since every proper coloring of a triangle is rainbow. We assume the lemma to be true for every integer up to $t-1$ and we move one step in the induction.
	Let $\Phi$ be a proper-coloring of $B_{3t-2}$ and let $V(B_{3t-2}) = \{u_1, u_2, v_1, \ldots, v_{3t-2}\}$ where $\{u_1,u_2,v_t\}$ is a triangle for every $t\in [3i-2]$ and  $\{v_1, \ldots, v_{3t-2}\}$ is an independent set.
	By induction, we have that $\phi$ induces a rainbow copy of $B_{(t-1)}$ which, without loss of generality, we assume to be induced by $\{u_1, u_2, v_1, \ldots, v_{t-1}\}$.
	  
	Let $X$ be the set containing any $v_k$, with $t\leq k\leq 3t-2$, such that $\{u_1, u_2, \ldots, v_{t-1}, v_k\}$ does not induces a rainbow copy $B_{2,t}$. Since the coloring is proper, then $\Phi(u_iv_k)$ is different of $\Phi(u_1u_2)$ for every $i\in \{1,2\}$ and $v_k\in X$. Therefore, if $v_k$ belongs to $X$ then we must have that $\Phi(u_iv_k)=\Phi(u_{3-i}v_\ell)$ for some $i\in \{1,2\}$ and $\ell \in[t-1]$. For fixed $i\in \{1,2\}$ there can be at most $t-1$ values $k$ such that $\Phi(u_iv_k)=\Phi{u_{3-i}v_\ell}$ for some $\ell \in [t-1]$, since the coloring is proper. Therefore we have that $|X|<2t-2$ and we conclude that there exists a vertex that yields a rainbow copy of $B_t$.
\end{proof}

Now we are ready to prove Corollary~\ref{booksaregreat}.

\begin{proof}[Proof of Corollary~\ref{booksaregreat}]
Let $H$ be a graph with $m_2(H)\in (1,2)$. It is straightforward to check that for any $t\geq 1$ we have that $m_2(B_t)=2$, which implies that the hypothesis $1< m_2(H) <m_2(B_t)$ of Theorem~\ref{thm:main} is satisfied. Since $B_{3t-2}$ is a $2$-balanced graph and $B_{3t-2}\flecha B_t$, Theorem~\ref{thm:main} implies that that for some $B>0$ we have that 
\[\lim_{n \rightarrow \infty} \P\left[G(n,p) \flecha B_t \oplus H\right] = 1\]

\noindent whenever $p\geq Bn^{-\beta(H,B_{3t-2})}$. Now we only have to show that $\beta(H,B_{3t-2}) > 1/m_2(B_t\oplus H)$. Note that $v(B_{3t-2})=3t$ and $e(B_{3t-2})= 6t-3$. Since $m_2(H)<2$, then we have that
\[\beta(H,B_{3t-2})= \frac{1}{6t-3}\left(3t-2+\frac{1}{m_2(H)}\right)> \frac{1}{2}.\]

\noindent On the other hand, since $B_t$ is contained in $B_t\oplus H$, then $1/m_2(B_t\oplus H) \leq 1/2$, which finishes the proof.
\end{proof}

\section*{Acknowledgement}

This work began while the authors were visiting the Instituto Nacional de Matemática Pura e
Aplicada in Rio de Janeiro (IMPA) as part of the Graphs@IMPA thematic program. We are very grateful to IMPA and the organizers for the support to be able to attend the event and for the great working environment. We also thank Yoshiharu Kohayakawa for introducing us to the problem and for useful discussions in the beginning of this project.

\end{document}